\newtheorem{theorem}{Theorem}[section]
\newtheorem{definition}[theorem]{Definition}
\newtheorem{lemma}[theorem]{Lemma}
\newtheorem{conjecture}[theorem]{Conjecture}
\theoremstyle{definition}
\title{Fuglede's conjecture holds in $\mathbb{Z}_{p}\times\mathbb{Z}_{p^{n}}$}
\author{Tao Zhang\thanks{Research supported by the National Natural Science Foundation of China under Grant No. 11801109.}\\
\footnotesize  School of Mathematics and Information Science, Guangzhou University, Guangzhou 510006, China. \\
}
\begin{document}

\date{}

\maketitle

\begin{abstract}
Fuglede's conjecture states that for a subset $\Omega$ of a locally compact abelian group $G$ with positive and finite Haar measure, there exists a subset of the dual group of $G$ which is an orthogonal basis of $L^{2}(\Omega)$ if and only if it tiles the group by translation. In this paper, we prove a divisibility property for a set in $\mathbb{Z}_{p}\times\mathbb{Z}_{p^{n}}$. Then using the divisibility property and equi-distributed property, we prove that Fuglede's conjecture holds in the group $\mathbb{Z}_{p}\times\mathbb{Z}_{p^{n}}$.

\medskip

\noindent {{\it Key words and phrases\/}: Fuglede's conjecture, tile, spectral set.}

\smallskip

\noindent {{\it AMS subject classifications\/}: 05B25, 52C20, 11P99, 42B05, 43A40.}
\end{abstract}

\section{Introduction}
Fourier analysis can be viewed as a tool to study an arbitrary function $f$ on (say) the reals $\mathbb{R}$, by looking at how such a function decomposes into a series of orthogonal basis. In this paper, we consider a generalization of this concept.
\begin{definition}
A bounded measurable subset $\Omega\subseteq\mathbb{R}^{n}$ with $\mu(\Omega)>0$ is called spectral, if there is a subset $\Lambda\subseteq\mathbb{R}^{n}$ such that the set of exponential functions $\{e_{\lambda}(x)\}_{\lambda\in\Lambda}$ is a complete orthogonal basis, where $e_{\lambda}(x)=e^{2\pi i\langle\lambda,x\rangle}$, that is
\[\langle e_{\lambda},e_{\lambda'}\rangle=\int_{\Omega}e_{\lambda-\lambda'}(x)dx=\delta_{\lambda,\lambda'}\mu(\Omega),\]
where $\delta_{\lambda,\lambda'}=1$ if $\lambda=\lambda'$ and $0$ otherwise,
and every $f\in L^{2}(\Omega)$ can be expressed as
\[f(x)=\sum_{\lambda\in\Lambda}a_{\lambda}e_{\lambda}(x),\]
for some $a_{\lambda}\in\mathbb{C}$. $\Lambda$ is called the spectrum of $\Omega$, and $(\Omega,\Lambda)$ is called a spectral pair in $\mathbb{R}^{n}$.
\end{definition}
The other class of sets under study is that of tiling sets. Tiling is one of the most diverse and ubiquitous concepts of modern mathematics.
\begin{definition}
A subset $A\subseteq\mathbb{R}^{n}$ tiles $\mathbb{R}^{n}$ by translation, if there is a translation set $T\subseteq\mathbb{R}^{n}$ such that almost all elements of $\mathbb{R}^{n}$ have a unique representation as a sum $a+t$, where $a\in A$, $t\in T$. We will denote this by $A\oplus T=\mathbb{R}^{n}$. $T$ is called the tiling complement of $A$, and $(A,T)$ is called a tiling pair in $\mathbb{R}^{n}$.
\end{definition}
In \cite{F74}, Fuglede proposed the following conjecture, which connected these two notions.
\begin{conjecture}
A subset $\Omega\subseteq\mathbb{R}^{n}$ of positive and finite Lebesgue measure is a spectral set if and only if it tiles $\mathbb{R}^{n}$ by translation.
\end{conjecture}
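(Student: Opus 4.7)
The plan is to attack both directions by Fourier analysis on $\chi_\Omega$, reducing the question to the structure of the zero set $Z(\widehat{\chi_\Omega})=\{\xi\in\mathbb{R}^n:\widehat{\chi_\Omega}(\xi)=0\}$. First I would recast spectrality: $(\Omega,\Lambda)$ is a spectral pair if and only if $(\Lambda-\Lambda)\setminus\{0\}\subseteq Z(\widehat{\chi_\Omega})$, together with the Parseval/completeness identity $\sum_{\lambda\in\Lambda}|\widehat{\chi_\Omega}(\xi-\lambda)|^2\equiv\mu(\Omega)^2$ almost everywhere. Tiling admits a dual description via Poisson-style summation: $\Omega\oplus T=\mathbb{R}^n$ forces $\widehat{\chi_\Omega}$ to vanish on a naturally associated locally finite set $T^\ast\setminus\{0\}$ attached to $T$. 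Thus both spectra and tiling complements sit inside $Z(\widehat{\chi_\Omega})$ subject to a completeness/support condition, and the goal becomes converting one such set into the other.

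For the direction spectral $\Rightarrow$ tiling, I would first show that any spectrum $\Lambda$ is uniformly discrete, has asymptotic density $\mu(\Omega)$, and is relatively dense. Next I would try to prove a rationality/periodicity step: any spectral $\Omega$ must, up to a set of measure zero, be supported on a periodic structure so that $\Lambda$ can be taken periodic as well. This reduces the continuous problem to the spectral-tile equivalence for subsets of a finite abelian group $G$. For the reverse direction tiling $\Rightarrow$ spectral, I would run a parallel periodization of the pair $\Omega\oplus T$ to again land in a finite abelian group, and then read off the spectrum from those characters of $G$ whose Fourier transforms vanish against the image of the reduced indicator.

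The hard step, and the main obstacle, is the spectral-tile equivalence for the finite abelian groups produced by the reduction. In full generality this is known to fail, by counterexamples of Tao, Kolountzakis--Matolcsi and others in sufficiently high dimension, so the plan above cannot be completed as stated and a clean proof of the conjecture in arbitrary $\mathbb{R}^n$ is not available. What I would do instead, and what this paper in fact carries out, is to fix a concrete tractable family of groups, namely $G=\mathbb{Z}_p\times\mathbb{Z}_{p^n}$, and attack the finite-group equivalence directly: factor the mask polynomial $A(x)=\sum_{a\in A}x^a$ over cyclotomics, extract a divisibility property for tiles and for spectral sets in $G$, and combine it with an equi-distribution argument along the fibers of the projection $\mathbb{Z}_p\times\mathbb{Z}_{p^n}\to\mathbb{Z}_{p^n}$. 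The corresponding special case of the conjecture then follows, which is the actual theorem the paper proves.
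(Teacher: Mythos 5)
You have correctly identified that this statement is Fuglede's original conjecture, which the paper states only as a conjecture and does not (and cannot) prove: as the paper itself notes, it is false in $\mathbb{R}^{n}$ for $n\ge 3$ in both directions by the counterexamples of Tao, Kolountzakis--Matolcsi and others. Your assessment --- that the general reduction strategy breaks down at the finite-group step and that the paper instead proves only the special case $\mathbb{Z}_{p}\times\mathbb{Z}_{p^{n}}$ via the divisibility and equi-distribution properties --- accurately matches what the paper actually does.
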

Fuglede \cite{F74} proved this conjecture under the additional assumption that the tiling set or the spectrum is a lattice subset of $\mathbb{R}^{n}$. After some positive results, Tao \cite{T04} disproved this conjecture by constructing a spectral set in $\mathbb{R}^{5}$ which is not a tile. Later, tiles with no spectrum in $\mathbb{R}^{5}$ had been found by Kolountzakis and Matolcsi \cite{KM06}. Currently, Fuglede's conjecture was disproved in $\mathbb{R}^{n}$ for $n\ge3$ in both directions \cite{AABF17,FMM06,FR06,FS20,K00,KM2006,KM06,M05}. Nonetheless, some important cases had been proven to be true. Iosevich, Katz and Tao \cite{IKT03} proved that Fuglede's conjecture holds for convex sets in $\mathbb{R}^{2}$, then a similar result in dimension 3 was proved by Greenfeld and Lev \cite{GL17}. Recently, Lev and Matolsci \cite{LM21} showed that Fuglede's conjecture holds for convex domains in $\mathbb{R}^{n}$ for all $n$.

It is natural to formulate the following conjecture, which is in a more general setting, the locally compact abelian group.
\begin{conjecture}
Let $G$ be a locally compact abelian group. A subset $\Omega\subseteq G$ of positive and finite Haar measure is a spectral set if and only if it is a tile.
\end{conjecture}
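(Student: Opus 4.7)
My plan is to translate both sides of the biconditional into Fourier-analytic conditions on the dual group $\hat G$ and then attempt to reduce the problem from an arbitrary locally compact abelian group to a finite abelian group. By Plancherel on an LCA group, a set $\Omega$ of positive finite Haar measure is spectral with spectrum $\Lambda\subseteq\hat G$ precisely when $(\Lambda-\Lambda)\setminus\{0\}$ lies in the zero set of $\widehat{\mathbf 1_\Omega}$ together with the completeness identity $\sum_{\lambda\in\Lambda}|\widehat{\mathbf 1_\Omega}(\xi-\lambda)|^2=\mu(\Omega)$ a.e., while $\Omega\oplus T=G$ is equivalent to the pointwise factorization $\widehat{\mathbf 1_\Omega}\,\widehat{\mathbf 1_T}=\mu(\Omega)\mu(T)\,\delta_0$ in a suitable distributional sense. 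Both conditions pivot on the same object, the zero set $Z(\widehat{\mathbf 1_\Omega})$, and the conjecture asks whether every ``spectrum inside $Z$'' can be converted into a ``tiling complement whose characters live in $Z$'' and vice versa.

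Second, I would reduce from general $G$ to a finite abelian group. The structure theorem writes $G\cong\mathbb R^a\times\mathbb Z^b\times K\times F$ with $K$ compact and $F$ finite; since spectrality and tileability are expected to respect direct-product decompositions, and a bounded $\Omega$ cannot interact with the continuous $\mathbb R^a$ factor in an ``irrational'' way without breaking Fuglede duality, one should be able to peel off the Euclidean and free parts. Rationality of the spectrum (in the spirit of Iosevich--Pedersen and Laba) then passes the compact factors to discrete quotients, and the Chinese Remainder Theorem further factors the resulting finite abelian group into $p$-primary components, so the whole question becomes: does Fuglede's conjecture hold on every finite abelian $p$-group?

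Third, on each $p$-group I would mimic the divisibility-plus-equi-distribution strategy advertised in the paper's abstract: use the vanishing of $\widehat{\mathbf 1_\Omega}$ on characters of order $p$ to extract divisibility constraints on $|\Omega|$, combine these with character-sum arguments to show that $\Omega$ distributes uniformly across the cosets of a carefully chosen subgroup, and then read off an explicit tiling complement (for one direction) or an explicit spectrum (for the other) from the resulting coset decomposition.

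The main obstacle---and the reason the conjecture in the generality stated cannot actually be proved---is that it is known to be false. Tao \cite{T04} and Kolountzakis--Matolcsi \cite{KM06} constructed spectral sets in $\mathbb R^5$ that fail to tile, and tiles in $\mathbb R^5$ with no spectrum, and these counterexamples descend to finite abelian groups whose $p$-primary components have rank at least three. Any workable proof must therefore restrict the class of groups to those whose subgroup lattice is narrow enough to forbid the known counterexample patterns. The present paper carries this out for $G=\mathbb Z_p\times\mathbb Z_{p^n}$, a two-generator $p$-group where the interplay between divisibility of $|\Omega|$ and equi-distribution across cosets is tight enough to push both directions of Fuglede's conjecture through to completion.
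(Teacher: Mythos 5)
This statement is Conjecture 1.4 of the paper, and the paper offers no proof of it --- nor could it, since (as the paper itself notes immediately after stating it) the conjecture is false for some groups: Tao's example and its relatives give spectral non-tiles and non-spectral tiles already in finite abelian groups of rank at least three, hence in locally compact abelian groups generally. Your proposal correctly identifies exactly this: the statement in the generality given is not a provable theorem but a (refuted) guiding conjecture, and the only honest ``proof attempt'' is to restrict the class of groups. On that central point you and the paper are in complete agreement, and your concluding paragraph accurately describes what the paper actually does prove, namely the case $G=\mathbb{Z}_p\times\mathbb{Z}_{p^n}$ via the divisibility property (Lemma 3.2) and the equi-distribution property (Lemma 3.1).

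One caveat on your intermediate reduction steps: the passage from a general locally compact abelian group to finite abelian groups, and from there to $p$-primary components, is not a theorem in the generality you sketch. The reduction of the Euclidean case $\mathbb{R}^d$ to $\mathbb{Z}^d$ and then to finite cyclic groups is known only in one direction at a time and under additional hypotheses (periodicity of tilings, rationality of spectra), and the CRT-style splitting into $p$-components is likewise not known to preserve the spectral--tile equivalence in general; indeed, if all of these reductions were valid unconditionally, Fuglede's conjecture for $\mathbb{R}^d$ would reduce entirely to finite $p$-groups, which is far from established. So while your diagnosis of the conjecture's falsity is correct and your description of the paper's actual theorem is accurate, the reduction machinery in your second step should not be presented as routine.
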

In this general setting, the conjecture is far from being solved and is false for some specific groups as Tao \cite{T04} showed. Hence, the question becomes for which abelian group $G$, Fuglede's conjecture holds. In \cite{FFLS19}, Fan et al. proved that Fuglede's conjecture holds in $\mathbb{Q}_{p}$, the field of $p$-adic numbers. In the following of this paper, we focus on finite abelian groups. Let $G$ be a finite abelian group, by the Fundamental Theorem of Finite Abelian Groups, $G\cong \mathbb{Z}_{m_{1}}\times\mathbb{Z}_{m_{2}}\times\dots\times\mathbb{Z}_{m_{t}}$, where $t$ is the number of generators, $\mathbb{Z}_{m_{i}}$ ($i=1,\dots,t$) is a cyclic group of order $m_{i}$, and $m_{1}\mid m_{2}\mid\dots\mid m_{t}$, $m_{1}>1$. If the abelian group with at least 3 generators, then there exists a finite abelian group such that the Fuglede's conjecture fails for both directions \cite{AABF17,FMM06,FR06,K00,KM2006,KM06,M05}. For abelian groups with 2 generators, Fuglede's conjecture holds in $\mathbb{Z}_{p}^{2}$ \cite{IMP17}, $\mathbb{Z}_{p}\times\mathbb{Z}_{p^{2}}$ \cite{S20}, $\mathbb{Z}_{p}\times\mathbb{Z}_{pq}$ \cite{KS2021}  and $\mathbb{Z}_{pq}\times\mathbb{Z}_{pq}$ \cite{FKS2012}, where $p,q$ are distinct primes. For abelian groups with 1 generator, Fuglede's conjecture holds in cyclic group $\mathbb{Z}_{N}$, where $N=p^{n},p^{n}q,p^{n}q^{2},pqr,p^{2}qr,pqrs$ \cite{FFS16,KMSV2012,KMSV20,L02,MK17,M21,S19,Somlai21}, and $p,q,r,s$ are distinct primes.

The primary aim of this paper is to continue this investigation and our main result is the following.
\begin{theorem}\label{mainthm}
Let $p$ be a prime and $n$ be a positive integer. A subset in $\mathbb{Z}_{p}\times\mathbb{Z}_{p^{n}}$ is a spectral set if and only if it is a tile of $\mathbb{Z}_{p}\times\mathbb{Z}_{p^{n}}$.
\end{theorem}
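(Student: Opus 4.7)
The plan is to prove both implications of Fuglede's conjecture in $G=\mathbb{Z}_p\times\mathbb{Z}_{p^n}$ by combining the divisibility and equi-distribution properties (established earlier in the paper) with an induction on $n$ and the standard mask-polynomial framework. To a subset $A\subseteq G$ I would associate the mask polynomial $A(x,y)=\sum_{(a,b)\in A}x^a y^b$, and identify a spectrum or tiling complement with a set $\Lambda\subseteq\widehat{G}$ of size $|A|$ whose nonzero differences all lie in the vanishing set $\mathcal{Z}(A)=\{\chi\in\widehat{G}:\widehat{1_A}(\chi)=0\}$. Since $|A|$ must divide $|G|=p^{n+1}$ in either case, one has $|A|=p^k$ for some $0\le k\le n+1$; the extreme values $k\in\{0,1,n,n+1\}$ reduce to cosets or complements of cosets, for which both directions follow immediately from Fuglede's original subgroup case.

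For the intermediate range $2\le k\le n-1$ I would induct on $n$, with the theorems for $\mathbb{Z}_p^2$ \cite{IMP17} and $\mathbb{Z}_p\times\mathbb{Z}_{p^2}$ \cite{S20} as base cases. The natural reducing subgroup is the unique order-$p$ subgroup $H=\{0\}\times p^{n-1}\mathbb{Z}_{p^n}$; quotienting by $H$ produces $\mathbb{Z}_p\times\mathbb{Z}_{p^{n-1}}$. The divisibility property controls the intersection sizes $|A\cap(g+H)|$, while equi-distribution, applied to an appropriate subgroup of $\widehat{G}$, forces these counts to be compatible with a tiling or spectral decomposition. When $A$ is a union of $H$-cosets, its image in the quotient is itself spectral or a tile by the inductive hypothesis, and I can lift back to $G$ using the fact that $H$ is both spectral and a tile. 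In the remaining case I would apply equi-distribution along a transverse cyclic subgroup of order $p^n$ to obtain an analogous reduction. The two directions of the equivalence then follow in parallel from Pontryagin self-duality of $G$: a spectral pair $(A,\Lambda)$ in $G$ is the same data as a spectral pair $(\Lambda,A)$ in $\widehat{G}\cong G$, so the spectral-to-tile direction reduces to the tile-to-spectral direction applied to $\Lambda$.

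The main obstacle I anticipate is the non-union regime, where $A$ meets each coset of $H$ in some constant number $c$ with $1<c<p$, so that neither $A$ nor its image in the quotient is directly amenable to the inductive hypothesis. Here the equi-distribution property would have to be used in its sharpest form to produce a two-factor decomposition $A=B\oplus C$ with $B$ contained in a proper subgroup and $C$ in a transversal, after which $B$ and $C$ are handled separately. Verifying that their spectra (respectively tiling complements) can be combined compatibly, and that no exotic structure lies outside this decomposition, is where the delicate combinatorial work of the argument will lie; this is also the step that requires exploiting both structural properties simultaneously, since divisibility is a statement about the vanishing of $\widehat{1_A}$ at roots of unity of a specified order, whereas equi-distribution is a coset-counting statement about $A$ itself.
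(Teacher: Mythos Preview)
Your proposal has two genuine gaps. First, you assert that ``$|A|$ must divide $|G|=p^{n+1}$ in either case,'' hence $|A|=p^k$. This is immediate for tiles, but for spectral sets it is precisely one of the hardest things to prove. The paper devotes an entire theorem to showing that a set with $|A|=mp^s$, $2\le m\le p-1$, cannot be spectral; the argument there uses the divisibility lemma to force $|I|=|J|=s$ (where $I,J$ index the zeros $(0,p^i)$ of $A$ and of the spectrum $B$), and then either reduces to $\mathbb{Z}_p\times\mathbb{Z}_{p^{n-1}}$ via carefully chosen digit-deletion maps $\phi_1,\phi_2$ (not the naive quotient by $H$), or produces a spectrum of size $p^n>|A|$ by a combinatorial analysis of how $B$ meets the fibres $\{x=t\}$. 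Your plan skips this step entirely, and your anticipated ``non-union regime'' with constant intersection $1<c<p$ is not the actual obstruction.

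Second, the duality reduction you propose is circular. Self-duality of $G$ gives that $(A,\Lambda)$ is spectral iff $(\Lambda,A)$ is spectral, but this does not reduce spectral-to-tile for $A$ to tile-to-spectral for $\Lambda$: to invoke tile-to-spectral for $\Lambda$ you would first need to know that $\Lambda$ tiles, which is again the spectral-to-tile direction. The paper proves the two directions by completely separate explicit constructions: for tile$\Rightarrow$spectral it bounds $|I|$ via the divisibility lemma and writes down a spectrum as a set of combinations $\sum s_i(0,p^{a_i})$ (possibly with one extra generator $(d,p^{b_k})$ or $(1,0)$); for spectral$\Rightarrow$tile with $|A|=p^s$ it writes down an explicit tiling complement $T$ and checks $\mathcal{Z}_A\cup\mathcal{Z}_T=G\setminus\{(0,0)\}$ coordinate by coordinate using the equi-distribution lemma. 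A minor additional point: the cases $k=1$ and $k=n$ do not reduce to cosets --- a spectral or tiling set of size $p$ in $G$ need not be a coset, and the paper handles three separate subcases depending on which equivalence class lies in $\mathcal{Z}_A$.
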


When we consider the Fuglede's conjecture in cyclic group $\mathbb{Z}_{N}$, one of the most important tools is the so called (T1) and (T2) conditions, which was introduced by Coven and Meyerowitz \cite{CM99}. In this paper, we will show a divisibility property in $\mathbb{Z}_{p}\times\mathbb{Z}_{p^{n}}$ in Lemma~\ref{lemma2}, which is a weaker property than (T1) condition. However, it is very useful for bounding the size of a set. Another useful tool to investigate the Fuglede's conjecture in $\mathbb{Z}_{p}\times\mathbb{Z}_{p^{n}}$ is the equi-distributed property, which has been proven in \cite{S20}. The divisibility property and equi-distributed property are the main tools to prove Theorem~\ref{mainthm}.

This paper is organized as follows. In Section~\ref{sectionproperty}, we give some basic results about spectral sets and tiles in finite abelian groups. In Section~\ref{sectionsetin}, we show some properties of sets in $\mathbb{Z}_{p}\times\mathbb{Z}_{p^{n}}$. In Section~\ref{sectionts}, we prove the Tile$\Rightarrow$Spectral direction of Theorem~\ref{mainthm}, and in Section~\ref{sectionst}, we prove the Spectral$\Rightarrow$Tile direction of Theorem~\ref{mainthm}. Section~\ref{conclusion} concludes the paper.
\section{Properties of spectral sets and tiles}\label{sectionproperty}
In this section, we give some properties of spectral sets and tiles in finite abelian groups. The following notations are fixed throughout this paper.
\begin{itemize}
  \item Let $a,b$ be integers such that $a\le b$, denote $[a,b]=\{a,a+1,\dots,b\}$.
  \item For any positive integer $m$, $\mathbb{Z}_{m}$ is the ring of integers modulo $m$, and $\mathbb{Z}_{m}^{*}=\{a\in\mathbb{Z}_{m}:\ \gcd(a,m)=1\}$.
  \item Let $p$ be a prime, for any $t\in[0,p^{m}-1]$, let $t_{p}[i]$ be the $i$-th coefficient of $p$-adic representation of $t$, i.e., $t=\sum_{i=0}^{m-1}t_{p}[i]p^{i}$, where $t_{p}[i]\in[0,p-1]$. We will simply use $t[i]$ instead of $t_{p}[i]$ when there is no misunderstanding. Let $M(t)$ be the minimal $i$ such that $t[i]\ne0$, i.e.,
      \[M(t):=\min\{i\in[0,m-1]: t[i]\ne0\}.\]
\end{itemize}
 Let $G$ be a finite abelian group, then $G\cong \mathbb{Z}_{m_{1}}\times\mathbb{Z}_{m_{2}}\times\dots\times\mathbb{Z}_{m_{t}}$, for some $m_{i}$ with $m_{1}\mid m_{2}\mid\dots\mid m_{t}$ and $m_{1}>1$. For any $g\in G$, we will represent $g$ as $(g_{1},\dots,g_{t})$, where $g_{i}\in\mathbb{Z}_{m_{i}}$.
For any $g,h\in G$, we define their inner product by
 \[\langle g,h\rangle:=\sum_{i=1}^{t}g_{i}h_{i}\frac{m_{t}}{m_{i}}.\]
 Let
\[\chi_{g}(h)=e^{\frac{2\pi i\cdot \langle g,h\rangle}{m_{t}}},\]
and $\chi_{g}\circ\chi_{h}=\chi_{g+h}$.
Then the set $\widehat{G}=\{\chi_{g}:\ g\in G\}$ with $\circ$ forms a group which is isomorphic to $G$.

Let $\mathbb{Z}[G]$ denote the group ring of $G$ over $\mathbb{Z}$. For any $A\in\mathbb{Z}[G]$,
$A$ can be written as $A=\sum_{g\in G}a_{g}g$, where $a_{g}\in\mathbb{Z}$. For any $\chi \in \widehat{G}$, define
\[\chi(A)=\sum_{g\in G} a_{g} \chi(g).\]
The following inversion formula shows that $A$ is completely determined
by its character value $\chi(A)$, where $\chi$ ranges over $\widehat{G}$.

\begin{lemma}[(Fourier inversion formula)]\label{inversion formula}
Let $G$ be an abelian group. If $A=\sum_{g\in G}a_g
g\in \mathbb{Z}[G]$, then
\[
a_g=\frac{1}{|G|}\sum_{\chi\in\widehat G}\chi(A) \chi (g^{-1}),
\]
for all $g\in G$.
\end{lemma}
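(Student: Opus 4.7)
The plan is to verify the identity directly: substitute the definition of $\chi(A)$ into the right-hand side, swap the order of summation, and invoke the orthogonality of characters. Writing $\chi(A)=\sum_{h\in G}a_h\chi(h)$ and using the multiplicativity $\chi(h)\chi(g^{-1})=\chi(hg^{-1})$ (which comes from $\chi_g\circ\chi_h=\chi_{g+h}$ in the excerpt), the right-hand side becomes
\[\frac{1}{|G|}\sum_{h\in G}a_h\sum_{\chi\in\widehat{G}}\chi(hg^{-1}).\]

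The key ingredient is the character orthogonality relation: for every $x\in G$, $\sum_{\chi\in\widehat{G}}\chi(x)$ equals $|G|$ when $x$ is the identity of $G$ and equals $0$ otherwise. Granting this, the inner sum equals $|G|$ exactly when $h=g$ and vanishes otherwise, so the whole expression collapses to $a_g$, as required.

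The only substantive point is the orthogonality relation itself, which I would establish by a standard one-line trick: if $x$ is not the identity, then the explicit isomorphism $\widehat{G}\cong G$ furnished by the formula $\chi_g(h)=e^{2\pi i\langle g,h\rangle/m_t}$ supplies a character $\psi$ with $\psi(x)\ne 1$; since $\chi\mapsto\psi\circ\chi$ permutes $\widehat{G}$, one gets $\psi(x)\sum_\chi\chi(x)=\sum_\chi\chi(x)$, which forces the sum to vanish. When $x$ is the identity, every character evaluates to $1$ and the sum is $|\widehat{G}|=|G|$. I do not foresee any real obstacles; this is standard finite Fourier analysis, and the paper plans to use the formula as a black-box tool rather than a deep fact. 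The only mild bookkeeping is the mixed additive/multiplicative notation, where $g^{-1}$ should be read as $-g$ when $G$ is written additively as in the paper.
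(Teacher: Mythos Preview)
Your argument is correct and is the standard proof of the Fourier inversion formula for finite abelian groups. The paper itself states this lemma without proof, treating it as a well-known fact, so there is no alternative approach to compare against; your write-up would serve perfectly well as the omitted justification.
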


Since we mainly work on the finite abelian group, we restate the definition of spectral set.
\begin{definition}
Let $G$ be a finite abelian group.
A subset $A\subseteq G$ is said to be spectral if there is a subset $B\subseteq G$ such that
\[\{\chi_{b}: b\in B\}\]
forms an orthogonal basis in $L^{2}(A)$, the vector space of complex valued functions on $A$ with Hermitian inner product $\langle f,g\rangle=\sum_{a\in A}f(a)\overline{g}(a)$. In such case, the set $B$ is called a spectrum of $A$, and $(A,B)$ is called a spectral pair.
\end{definition}
Since the dimension of $L^{2}(A)$ is $|A|$, the pair $(A,B)$ being a spectral pair is equivalent to
\[|A|=|B|\textup{ and }\chi_{b-b'}(A)=0\textup{ for all }b\ne b'\in B.\]
The set of zeros of $A$ is defined by
\[\mathcal{Z}_{A}=\{g\in G: \chi_{g}(A)=0\}.\]

\begin{lemma}\label{prelemma3}
If $g\in\mathcal{Z}_{A}$, then $rg\in\mathcal{Z}_{A}$ for all $r\in\mathbb{Z}_{m_{t}}^{*}$.
\end{lemma}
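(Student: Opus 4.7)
The plan is to invoke Galois conjugation in the cyclotomic field $\mathbb{Q}(\zeta)$ with $\zeta = e^{2\pi i/m_t}$. First I would verify the identity $\chi_{rg}(h) = \chi_g(h)^r$ for every $h \in G$: expanding the inner product gives $\langle rg, h\rangle \equiv r\langle g, h\rangle \pmod{m_t}$, because each contribution $rg_i \cdot h_i \cdot (m_t/m_i)$ changes by a multiple of $m_t$ when $rg_i$ is reduced modulo $m_i$ (this uses $m_i \mid m_t$). Consequently
\[
\chi_{rg}(A) \;=\; \sum_{a \in A} \chi_g(a)^r.
\]

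Next I would observe that each $\chi_g(a)$ is a power of $\zeta$, so $\chi_g(A) \in \mathbb{Z}[\zeta]$. When $r \in \mathbb{Z}_{m_t}^*$, the assignment $\sigma_r : \zeta \mapsto \zeta^r$ extends to a Galois automorphism of $\mathbb{Q}(\zeta)/\mathbb{Q}$. Applying $\sigma_r$ termwise to $\chi_g(A) = \sum_{a\in A}\chi_g(a)$ produces exactly $\sum_{a\in A}\chi_g(a)^r = \chi_{rg}(A)$. Since $\chi_g(A) = 0$ by hypothesis and $\sigma_r(0) = 0$, we conclude $\chi_{rg}(A) = 0$, i.e., $rg \in \mathcal{Z}_A$.

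There is no genuine obstacle here; the only detail to handle carefully is the compatibility between the componentwise scalar action $r \cdot g$ in $G$ and the exponent computation modulo $m_t$, and this is immediate from the chain of divisibilities $m_1 \mid m_2 \mid \dots \mid m_t$.
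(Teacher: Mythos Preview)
Your proposal is correct and follows essentially the same approach as the paper: both arguments reduce the claim to the observation that $\chi_{rg}(A)$ is the image of $\chi_g(A)$ under the Galois automorphism $\sigma_r:\zeta\mapsto\zeta^r$ of the cyclotomic field $\mathbb{Q}(e^{2\pi i/m_t})$. The paper compresses this to the single phrase ``By Galois theory,'' whereas you spell out the verification of $\langle rg,h\rangle\equiv r\langle g,h\rangle\pmod{m_t}$ and the action of $\sigma_r$ explicitly.
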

\begin{proof}
Since $g\in\mathcal{Z}_{A}$, then we have
\[\chi_{g}(A)=\sum_{h\in A}e^{\frac{2\pi i\cdot \langle g,h\rangle}{m_{t}}}=0.\]
By Galois theory,
\[\chi_{rg}(A)=\sum_{h\in A}e^{\frac{2\pi i\cdot \langle rg,h\rangle}{m_{t}}}=\sum_{h\in A}e^{\frac{2\pi i\cdot r\langle g,h\rangle}{m_{t}}}=0\]
for all $r\in\mathbb{Z}_{m_{t}}^{*}$, which means that $rg\in\mathcal{Z}_{A}$.
\end{proof}

Now we give some equivalent conditions of spectral pair.
\begin{lemma}\label{lemma3}
Let $A,B\subseteq G$. Then the following statements are equivalent.
\begin{enumerate}
  \item[(a)] $(A,B)$ is a spectral pair.
  \item[(b)] $(B,A)$ is a spectral pair.
  \item[(c)] $|A|=|B|$ and $(B-B)\backslash\{0\}\subseteq\mathcal{Z}_{A}$.
  \item[(d)] The pair $(aA+g,bB+h)$ is a spectral pair for all $a,b\in\mathbb{Z}_{m_{t}}^{*}$ and $g,h\in G$.
\end{enumerate}
\end{lemma}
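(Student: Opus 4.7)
My plan is to use (c) as the hub: first show (a)$\Leftrightarrow$(c) directly from the definition, then use the character matrix to get (a)$\Leftrightarrow$(b), and finally use Lemma~\ref{prelemma3} together with translation--invariance of the zero set to get (a)$\Leftrightarrow$(d).

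For (a)$\Leftrightarrow$(c), I would compute the Hermitian inner product in $L^{2}(A)$ directly:
\[
\langle \chi_{b},\chi_{b'}\rangle \;=\; \sum_{a\in A}\chi_{b}(a)\overline{\chi_{b'}(a)} \;=\; \sum_{a\in A}e^{2\pi i\langle b-b',a\rangle/m_{t}} \;=\; \chi_{b-b'}(A).
\]
Hence $\{\chi_{b}:b\in B\}$ is an orthogonal family in $L^{2}(A)$ iff $\chi_{b-b'}(A)=0$ for all $b\neq b'$ in $B$, i.e.\ $(B-B)\setminus\{0\}\subseteq \mathcal{Z}_{A}$. Since $\dim L^{2}(A)=|A|$, pairwise orthogonality plus $|A|=|B|$ already forces $\{\chi_{b}\}_{b\in B}$ to be a basis, and conversely a basis of pairwise orthogonal nonzero vectors has cardinality $|A|$. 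This gives the equivalence of (a) and (c).

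For (a)$\Leftrightarrow$(b) the natural device is the $|A|\times|B|$ matrix $M=(e^{2\pi i\langle a,b\rangle/m_{t}})_{a\in A,\,b\in B}$. The off-diagonal entries of $M^{*}M$ are exactly the values $\chi_{b-b'}(A)$ for $b\neq b'$ in $B$, while its diagonal entries are all equal to $|A|$; and symmetrically the off-diagonal entries of $MM^{*}$ are the values $\chi_{a-a'}(B)$. Once $|A|=|B|$, the matrix $M$ is square, so $M^{*}M=|A|I$ implies $M$ is $\sqrt{|A|}$ times a unitary matrix, and hence $MM^{*}=|A|I$ as well. This yields the symmetric form of (c), and therefore (b).

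For (a)$\Leftrightarrow$(d), since $(a,b,g,h)=(1,1,0,0)$ recovers (a), only (a)$\Rightarrow$(d) needs work. I would verify two invariances of the zero set. First, a translation only multiplies character values by a unimodular factor,
\[
\chi_{g}(A+h)=\chi_{g}(h)\,\chi_{g}(A),
\]
so $\mathcal{Z}_{A+h}=\mathcal{Z}_{A}$ and analogously for $B$; translations therefore do not affect condition (c). Second, for any $a\in\mathbb{Z}_{m_{t}}^{*}$ (which acts as an automorphism of $G$ because $m_{i}\mid m_{t}$ forces $\gcd(a,m_{i})=1$), the identity $\chi_{g}(aA)=\chi_{ag}(A)$ gives $\mathcal{Z}_{aA}=a^{-1}\mathcal{Z}_{A}$. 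Hence, assuming (c), the requirement for $(aA+g,\,bB+h)$ becomes $ab\,(B-B)\setminus\{0\}\subseteq \mathcal{Z}_{A}$, which follows from $(B-B)\setminus\{0\}\subseteq\mathcal{Z}_{A}$ and Lemma~\ref{prelemma3} applied with the unit $ab\in\mathbb{Z}_{m_{t}}^{*}$.

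There is no real obstacle here: the only point requiring care is checking that scaling by $a\in\mathbb{Z}_{m_{t}}^{*}$ is a well-defined automorphism of $G$ and that $\mathcal{Z}_{A}$ transforms as $a^{-1}\mathcal{Z}_{A}$ under $A\mapsto aA$; everything else reduces to bookkeeping with the character sums and one square-matrix linear algebra observation.
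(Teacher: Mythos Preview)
Your proposal is correct and follows essentially the same approach as the paper. The paper defers the equivalence of (a), (b), (c) to \cite{S20}, while you spell out the standard character-matrix argument; for (a)$\Rightarrow$(d) the paper proceeds in two hops via the symmetry (b) (first pass to $(A,bB+h)$, then swap and pass to $(aA+g,bB+h)$), whereas you verify condition (c) for $(aA+g,bB+h)$ in one shot using $\mathcal{Z}_{aA+g}=a^{-1}\mathcal{Z}_{A}$---but both reductions rest on the same ingredients, namely the difference-set reformulation (c) and Lemma~\ref{prelemma3}.
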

\begin{proof}
The equivalence of (a), (b) and (c) can be found in \cite{S20}.

For (a)$\Rightarrow$(d), note that $|B|=|bB+h|$ and $(bB+h)-(bB+h)=b(B-B)$, by (c) and Lemma~\ref{prelemma3}, $(A,bB+h)$ is a spectral pair. By (b), $(bB+h,A)$ is a spectral pair, then a similar discussion as above, $(aA+g,bB+h)$ is a spectral pair. For (d)$\Rightarrow$(a), we only need to take $a=b=1$ and $g=h=0$.
\end{proof}

\begin{definition}
Let $G$ be a finite abelian group.
A subset $A\subseteq G$ is said to be a tile if there is a subset $T\subseteq G$ such that each element $g\in G$ can be expressed uniquely in the form
\[g=a+t,\ a\in A,\ t\in T.\]
We will denote this by $G=A\oplus T$.
The set $T$ is called a tiling complement of $A$, and $(A,T)$ is called a tiling pair.
\end{definition}
The following result can be found in \cite{S20}, \cite[Lemma 2.1]{SS09}.
\begin{lemma}\label{prelemma2}
Let $A,T$ be subsets in $G$. Then the following statements are equivalent.
\begin{enumerate}
  \item[(a)] $(A,T)$ is a tiling pair.
  \item[(b)] $(T,A)$ is a tiling pair.
  \item[(c)] $(A+g,T+h)$ is a tiling pair.
  \item[(d)] $|A|\cdot|T|=|G|$ and $(A-A)\cap(T-T)=\{0\}$.
  \item[(e)] $|A|\cdot|T|=|G|$ and $\mathcal{Z}_{A}\cup\mathcal{Z}_{T}=G\backslash\{0\}$.
\end{enumerate}
\end{lemma}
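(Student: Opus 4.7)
The plan is to establish the five-way equivalence in three clusters: the symmetry/translation statements (a)$\Leftrightarrow$(b)$\Leftrightarrow$(c), the combinatorial reformulation (a)$\Leftrightarrow$(d), and the Fourier-analytic reformulation (a)$\Leftrightarrow$(e).

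First, (a)$\Leftrightarrow$(b) is immediate from the symmetry of the defining identity $G = A \oplus T$ in the two sets. For (a)$\Leftrightarrow$(c), I would use that $x \mapsto x + g + h$ is a bijection on $G$: each $x \in G$ has a unique decomposition $x = a + t$ with $a \in A$, $t \in T$ if and only if each element of $G$ has a unique decomposition of the form $(a+g) + (t+h)$, which is precisely the tiling property for $(A+g, T+h)$.

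Next, for (a)$\Leftrightarrow$(d) I would run a counting/injectivity argument on the sum map $\sigma : A \times T \to G$ defined by $(a,t) \mapsto a+t$. A tiling pair is precisely the statement that $\sigma$ is a bijection. Injectivity amounts to the assertion that $a_{1} + t_{1} = a_{2} + t_{2}$ forces $a_{1}=a_{2}$ and $t_{1}=t_{2}$; rewriting $a_{1}-a_{2} = t_{2}-t_{1}$, this is exactly $(A-A) \cap (T-T) = \{0\}$. Bijectivity forces $|A|\cdot|T| = |G|$, and once the latter equality holds, injectivity of $\sigma$ coincides with surjectivity, closing both directions.

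For (a)$\Leftrightarrow$(e) I would work in $\mathbb{Z}[G]$, identifying $A$ with $\sum_{a \in A} a$ and similarly for $T$. The tiling condition $G = A \oplus T$ is equivalent to the group-ring identity $A \cdot T = \sum_{g \in G} g$ (nonnegativity of the coefficients of $A \cdot T$ is what matches unique-representation to equality of coefficients). Applying any $\chi_{g}\in\widehat{G}$ yields $\chi_{g}(A)\chi_{g}(T) = \chi_{g}\!\bigl(\sum_{h \in G} h\bigr)$; by orthogonality of characters, the right-hand side equals $|G|$ when $g = 0$ and $0$ otherwise. Hence the tiling condition is equivalent to $|A|\cdot|T| = |G|$ together with $\chi_{g}(A)\chi_{g}(T) = 0$ for every $g \ne 0$, which is exactly $\mathcal{Z}_{A} \cup \mathcal{Z}_{T} = G \setminus \{0\}$. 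The converse passage from equal character values to equal group-ring elements is Lemma~\ref{inversion formula}.

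The main obstacle, modest as it is, lies in (a)$\Leftrightarrow$(e): one must set up the dictionary between the unique-representation property and the group-ring identity $A \cdot T = \sum_{g \in G} g$ cleanly, and then deploy Fourier inversion to convert the character-zero condition back to that identity. The other equivalences reduce to elementary bijection and translation arguments.
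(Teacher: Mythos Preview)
Your argument is correct and is the standard route to these equivalences. Note, however, that the paper does not actually supply a proof of this lemma: it simply cites \cite{S20} and \cite[Lemma~2.1]{SS09}. So there is no in-paper proof to compare against; what you have written is exactly the kind of argument those references contain. One very small point you glossed over in (e): to get the \emph{equality} $\mathcal{Z}_A\cup\mathcal{Z}_T=G\setminus\{0\}$ rather than just the inclusion $\supseteq$, you also need $0\notin\mathcal{Z}_A\cup\mathcal{Z}_T$, which follows from $\chi_0(A)=|A|>0$ and $\chi_0(T)=|T|>0$ once $|A|\cdot|T|=|G|$.
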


If $|A|=1$ or $A=G$, then the set $A$ is called trivial. It is easy to see that a trivial set is a spectral set and also a tiling set. In the following of this paper, we will only consider nontrivial set.
\begin{lemma}\label{lemma4}
If $(A,T)$ is a nontrivial tiling pair of $G$, then $\mathcal{Z}_{A}\ne\emptyset$ and $\mathcal{Z}_{T}\ne\emptyset$.
\end{lemma}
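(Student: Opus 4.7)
The plan is to argue by contradiction using the characterization of tiling pairs in Lemma~\ref{prelemma2}(e): since $(A,T)$ is a tiling pair, $\mathcal{Z}_{A}\cup\mathcal{Z}_{T}=G\setminus\{0\}$. Suppose, for contradiction, that $\mathcal{Z}_{A}=\emptyset$. Then every nonzero $g\in G$ must lie in $\mathcal{Z}_{T}$, i.e., $\chi_{g}(T)=0$ for all $g\neq 0$, while $\chi_{0}(T)=|T|$.

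I would then feed this character data into the Fourier inversion formula (Lemma~\ref{inversion formula}), applied to the group ring element $T=\sum_{t\in T}t\in\mathbb{Z}[G]$. For any $h\in G$,
\[
\mathbf{1}_{T}(h)=\frac{1}{|G|}\sum_{\chi\in\widehat{G}}\chi(T)\chi(h^{-1})=\frac{1}{|G|}\chi_{0}(T)=\frac{|T|}{|G|},
\]
a constant independent of $h$. Since $\mathbf{1}_{T}$ takes only the values $0$ and $1$, and $T\neq\emptyset$ (as $|A|\cdot|T|=|G|$), this constant must equal $1$, forcing $T=G$ and hence $|A|=1$. This contradicts the hypothesis that $A$ is nontrivial.

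The analogous argument, swapping the roles of $A$ and $T$ (which is legal by Lemma~\ref{prelemma2}(b)), shows that $\mathcal{Z}_{T}=\emptyset$ would force $A=G$, again contradicting nontriviality. Thus both $\mathcal{Z}_{A}$ and $\mathcal{Z}_{T}$ are nonempty. There is no real obstacle here; the only thing to be careful about is noting that a $\{0,1\}$-valued function which is constant on a nonempty subset must be identically $1$, so that the Fourier inversion step genuinely produces the contradiction.
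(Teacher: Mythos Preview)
Your proof is correct and follows essentially the same approach as the paper's: assume $\mathcal{Z}_{A}=\emptyset$, use Lemma~\ref{prelemma2}(e) to force $\mathcal{Z}_{T}=G\setminus\{0\}$, apply Fourier inversion to conclude $T=G$ and hence $|A|=1$, contradicting nontriviality, then argue symmetrically. The paper's write-up is simply terser, omitting the explicit computation of $\mathbf{1}_{T}(h)=|T|/|G|$ that you spell out.
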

\begin{proof}
If $\mathcal{Z}_{A}=\emptyset$, then by Lemma~\ref{prelemma2}, $\mathcal{Z}_{T}=G\backslash\{0\}$. By the Fourier inversion formula (Lemma~\ref{inversion formula}), we have $T=G$, then $|A|=1$, which is a contradiction. Hence $\mathcal{Z}_{A}\ne\emptyset$. Similarly, we can get $\mathcal{Z}_{T}\ne\emptyset$.
\end{proof}

\section{Sets in $\mathbb{Z}_{p}\times\mathbb{Z}_{p^{n}}$}\label{sectionsetin}
Recall that for $u,v\in\mathbb{Z}_{p}\times\mathbb{Z}_{p^{n}}$, their inner product is
\[\langle u,v\rangle=p^{n-1}u_{1}v_{1}+u_{2}v_{2}\in\mathbb{Z}_{p^{n}}.\]
Define
\[H(u,t):=\{x\in\mathbb{Z}_{p}\times\mathbb{Z}_{p^{n}}: \langle x,u\rangle=t\},\]
and
\[H_{A}(u,t):=H(u,t)\cap A,\]
for $u\in\mathbb{Z}_{p}\times\mathbb{Z}_{p^{n}}$, $t\in\mathbb{Z}_{p^{n}}$ and $A\subseteq \mathbb{Z}_{p}\times\mathbb{Z}_{p^{n}}$. Then the following lemma, which is called equi-distributed property, can be found in \cite{S20}.
\begin{lemma}\label{lemma1}
Let $A\subseteq\mathbb{Z}_{p}\times\mathbb{Z}_{p^{n}}$ and $u\in\mathbb{Z}_{p}\times\mathbb{Z}_{p^{n}}$. Then $u\in\mathcal{Z}_{A}$ if and only if $|H_{A}(u,t)|=|H_{A}(u,t')|$ for all $t'\equiv t\pmod{p^{n-1}}$.
\end{lemma}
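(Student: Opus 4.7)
The plan is to expand $\chi_u(A)$ as a weighted sum over the level sets $H_A(u,t)$ and then extract vanishing conditions from the structure of the cyclotomic field $\mathbb{Q}(\zeta)$, where $\zeta=e^{2\pi i/p^{n}}$. Since $A$ is the disjoint union of the $H_A(u,t)$ over $t\in\mathbb{Z}_{p^{n}}$, we can rewrite
\[
\chi_u(A)\;=\;\sum_{h\in A} e^{2\pi i\langle u,h\rangle/p^{n}}\;=\;\sum_{t\in\mathbb{Z}_{p^{n}}}|H_A(u,t)|\,\zeta^{t},
\]
and the question reduces to characterising when this non-negative integer combination of the $\zeta^{t}$ equals zero.

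Next I would reindex $t=qp^{n-1}+r$ with $0\le r<p^{n-1}$ and $0\le q<p$, and set $\omega=\zeta^{p^{n-1}}$, which is a primitive $p$-th root of unity. This rewrites the sum as
\[
\chi_u(A)\;=\;\sum_{r=0}^{p^{n-1}-1}\left(\sum_{q=0}^{p-1}|H_A(u,qp^{n-1}+r)|\,\omega^{q}\right)\zeta^{r}.
\]
Because $\zeta$ is a root of $x^{p^{n-1}}-\omega\in\mathbb{Q}(\omega)[x]$, the collection $\{1,\zeta,\dots,\zeta^{p^{n-1}-1}\}$ spans $\mathbb{Q}(\zeta)$ over $\mathbb{Q}(\omega)$; a dimension count using $[\mathbb{Q}(\zeta):\mathbb{Q}]=p^{n-1}(p-1)$ and $[\mathbb{Q}(\omega):\mathbb{Q}]=p-1$ shows it is in fact a basis. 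Hence $\chi_u(A)=0$ if and only if each inner sum $\sum_{q=0}^{p-1}|H_A(u,qp^{n-1}+r)|\,\omega^{q}$ vanishes.

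Finally, since the minimal polynomial of $\omega$ over $\mathbb{Q}$ is $\Phi_{p}(x)=1+x+\cdots+x^{p-1}$ of degree $p-1$, any rational relation $\sum_{q=0}^{p-1}a_{q}\omega^{q}=0$ forces the polynomial $\sum_{q}a_{q}x^{q}$ to be a rational multiple of $\Phi_{p}(x)$, so $a_{0}=a_{1}=\cdots=a_{p-1}$. Applying this with $a_{q}=|H_A(u,qp^{n-1}+r)|$ shows that the vanishing of the inner sum for a fixed $r$ is exactly the equality of $|H_A(u,t)|$ across all $t$ in the residue class of $r$ modulo $p^{n-1}$. Combining the two quantifiers yields the lemma.

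The only mildly delicate step I anticipate is the dimension count establishing that $\{1,\zeta,\ldots,\zeta^{p^{n-1}-1}\}$ is a genuine $\mathbb{Q}(\omega)$-basis rather than merely a spanning set, but this is standard cyclotomic theory. An alternative, entirely polynomial-level argument would work directly modulo $\Phi_{p^{n}}(x)=\Phi_{p}(x^{p^{n-1}})$, regrouping the coefficients of $\sum_{t}|H_A(u,t)|x^{t}$ by residue class mod $p^{n-1}$ and invoking divisibility by $\Phi_{p}(x^{p^{n-1}})$; this bypasses explicit Galois theory but arrives at the same conclusion.
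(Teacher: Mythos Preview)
Your argument is correct. The decomposition of $\chi_u(A)$ by level sets, the tower $\mathbb{Q}\subset\mathbb{Q}(\omega)\subset\mathbb{Q}(\zeta)$ with the basis $\{1,\zeta,\dots,\zeta^{p^{n-1}-1}\}$, and the reduction to the vanishing criterion for $\sum_q a_q\omega^q$ via $\Phi_p$ are all valid and assembled cleanly. The dimension count you flag as ``mildly delicate'' is indeed routine: $[\mathbb{Q}(\zeta):\mathbb{Q}(\omega)]=\phi(p^n)/\phi(p)=p^{n-1}$, and since $\zeta^{p^{n-1}}=\omega$ the set $\{1,\zeta,\dots,\zeta^{p^{n-1}-1}\}$ spans, hence is a basis.

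However, there is nothing to compare against in the paper itself: the paper does not prove Lemma~\ref{lemma1} but simply imports it from Shi~\cite{S20}, where the equi-distributed property is established (in fact for $\mathbb{Z}_{p^m}\times\mathbb{Z}_{p^n}$, as the conclusion notes). Your proof is therefore a self-contained substitute for that citation. The approach you give---partition by residue class of $t$ modulo $p^{n-1}$ and invoke linear independence of $\zeta^0,\dots,\zeta^{p^{n-1}-1}$ over $\mathbb{Q}(\omega)$---is the natural one and is essentially what any proof of this fact must do; your alternative suggestion of working directly with divisibility by $\Phi_{p^n}(x)=\Phi_p(x^{p^{n-1}})$ is an equivalent reformulation at the polynomial level.
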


Now we prove the divisibility property for a set in $\mathbb{Z}_{p}\times\mathbb{Z}_{p^{n}}$, which is useful in the following sections.
\begin{lemma}\label{lemma2}
Let $A\subseteq\mathbb{Z}_{p}\times\mathbb{Z}_{p^{n}}$. If $(a,p^{i_{1}}),(0,p^{i_{2}}),\dots,(0,p^{i_{s}})\in\mathcal{Z}_{A}$ for some $a\in\mathbb{Z}_{p}$ and $0\le i_{1}<i_{2}<\dots<i_{s}\le n-1$, then $p^{s}\mid|A|$.
\end{lemma}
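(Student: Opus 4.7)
The plan is to use Lemma~\ref{lemma1} applied to the zero $(a,p^{i_1})$ to extract one factor of $p$, and then to use a cyclotomic divisibility argument based on the remaining zeros $(0,p^{i_2}),\ldots,(0,p^{i_s})$ to extract $s-1$ further factors of $p$.

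For the first step, partition $A$ by the residue of $x_2$ modulo $p^{n-1-i_1}$: set $A^{(s')} = \{x\in A : x_2\equiv s'\pmod{p^{n-1-i_1}}\}$ for $s'\in[0,p^{n-1-i_1}-1]$. For any $x\in A^{(s')}$ one has $p^{n-1}ax_1\equiv 0\pmod{p^{n-1}}$ and $p^{i_1}x_2\equiv p^{i_1}s'\pmod{p^{n-1}}$, so $\langle x,(a,p^{i_1})\rangle\equiv p^{i_1}s'\pmod{p^{n-1}}$. Hence
\[
A^{(s')} \;=\; \bigsqcup_{j=0}^{p-1} H_A\bigl((a,p^{i_1}),\, p^{i_1}s' + jp^{n-1}\bigr).
\]
By Lemma~\ref{lemma1} these $p$ hyperplanes have equal cardinality, so $p\mid |A^{(s')}|$. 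This step treats $a=0$ and $a\neq 0$ uniformly because the $p^{n-1}ax_1$ contribution is invisible modulo $p^{n-1}$.

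For the second step, set $f(s') = |A^{(s')}|$ and form $F(Y) = \sum_{s'=0}^{p^{n-1-i_1}-1} f(s')Y^{s'}\in\mathbb{Z}[Y]$. Since $p\mid f(s')$ for every $s'$, we may write $F(Y) = p\,G(Y)$ with $G\in\mathbb{Z}[Y]$. For each $k\in\{2,\ldots,s\}$, the inequality $i_k>i_1$ gives $p^{n-i_k}\mid p^{n-1-i_1}$, so $\omega_{p^{n-i_k}}^{p^{n-1-i_1}}=1$, and
\[
F(\omega_{p^{n-i_k}}) \;=\; \sum_{x\in A}\omega_{p^{n-i_k}}^{x_2} \;=\; \chi_{(0,p^{i_k})}(A) \;=\; 0.
\]
Since $\mathbb{Z}[\omega_{p^{n-i_k}}]$ is an integral domain and $p\neq 0$ there, this forces $G(\omega_{p^{n-i_k}})=0$, and hence $\Phi_{p^{n-i_k}}(Y)\mid G(Y)$ in $\mathbb{Z}[Y]$. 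The cyclotomic polynomials $\Phi_{p^{n-i_k}}$ for $k=2,\ldots,s$ are pairwise coprime, so $\prod_{k=2}^{s}\Phi_{p^{n-i_k}}(Y)\mid G(Y)$.

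Evaluating at $Y=1$ and using $\Phi_{p^m}(1)=p$ for $m\ge 1$ yields $p^{s-1}\mid G(1)$, and therefore $|A| = F(1) = p\,G(1)$ is divisible by $p^s$. The main obstacle is the first step: verifying that even when $a\neq 0$ the twisted hyperplanes of $(a,p^{i_1})$ still package $A^{(s')}$ into $p$ equal-sized pieces. The key point is that membership in $A^{(s')}$ is governed solely by the reduction of the inner product modulo $p^{n-1}$, in which the $x_1$-term disappears, so the equidistribution content of Lemma~\ref{lemma1} applies exactly as in the case $a=0$.
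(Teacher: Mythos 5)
Your proof is correct, but it takes a genuinely different route from the paper's for most of the argument. The paper proves the whole lemma combinatorially: it sets up a nested family of index sets $S(r,v,k)$ and shows that each hyperplane section $H_{A}((0,p^{i_{r}}),\cdot)$ decomposes as a disjoint union of sections for the next zero down the chain, so that Lemma~\ref{lemma1} can be applied $s$ times in succession, extracting one factor of $p$ at each level of the telescoping decomposition that ends at $(a,p^{i_{1}})$. You instead invoke Lemma~\ref{lemma1} only once, for the twisted zero $(a,p^{i_{1}})$, to get $p\mid|A^{(s')}|$ coefficientwise, and then switch to a mask-polynomial argument: $F(Y)=pG(Y)$, the vanishing $F(\omega_{p^{n-i_{k}}})=\chi_{(0,p^{i_{k}})}(A)=0$ forces $\Phi_{p^{n-i_{k}}}\mid G$ for each $k\ge 2$, and evaluating the coprime product at $Y=1$ yields the remaining $p^{s-1}$. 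Both proofs turn on the same key observation for the one zero with possibly nonzero first coordinate, namely that the $p^{n-1}ax_{1}$ term is invisible modulo $p^{n-1}$, so the equidistribution of Lemma~\ref{lemma1} packages each residue class of $x_{2}$ modulo $p^{n-1-i_{1}}$ into $p$ equal pieces. What the paper's version buys is self-containment (only Lemma~\ref{lemma1} is used, no cyclotomic algebra); what yours buys is lighter bookkeeping and an explicit bridge to the Coven--Meyerowitz (T1)-style divisibility machinery, since the zeros $(0,p^{i_{k}})$ are exactly the ones whose character sums project onto the second coordinate. The only detail worth making explicit in your write-up is that $\omega_{p^{n-i_{k}}}^{x_{2}}$ depends only on $x_{2}\bmod p^{n-1-i_{1}}$ because $i_{k}>i_{1}$, which you do note via $p^{n-i_{k}}\mid p^{n-1-i_{1}}$; with that in place the identification $F(\omega_{p^{n-i_{k}}})=\chi_{(0,p^{i_{k}})}(A)$ is legitimate and the argument is complete.
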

\begin{proof}
For any $2\le r\le s$, $v\in[0,p^{n-1-i_{r}}-1]$ and $k\in[0,p-1]$, define
\[S(r,v,k)=\{u\in[0,p^{n-1-i_{r-1}}-1]: u[i]=v[i]\textup{ for }i\in[0,n-2-i_{r}]\textup{ and }u[n-1-i_{r}]=k\}.\]

{\bf{Claim:}} For any $a\in\mathbb{Z}_{p}$, $v\in[0,p^{n-1-i_{r}}-1]$ and $k\in[0,p-1]$, we have
\[\cup_{u\in S(r,v,k)}\cup_{j=0}^{p-1}H_{A}((a,p^{i_{r-1}}),up^{i_{r-1}}+jp^{n-1})=H_{A}((0,p^{i_{r}}),vp^{i_{r}}+kp^{n-1}).\]

If $(x,y)\in\cup_{u\in S(r,v,k)}\cup_{j=0}^{p-1}H_{A}((a,p^{i_{r-1}}),up^{i_{r-1}}+jp^{n-1})$, then we have
\[axp^{n-1}+yp^{i_{r-1}}\equiv up^{i_{r-1}}+jp^{n-1}\pmod{p^{n}},\]
which leads to $y\equiv u\pmod{p^{n-1-i_{r-1}}}.$ By the definition of $S(r,v,k)$,
\[yp^{i_{r}}\equiv vp^{i_{r}}+kp^{n-1}\pmod{p^{n}},\]
then $(x,y)\in H_{A}((0,p^{i_{r}}),vp^{i_{r}}+kp^{n-1})$. Hence,
\begin{align}
\cup_{u\in S(r,v,k)}\cup_{j=0}^{p-1}H_{A}((a,p^{i_{r-1}}),up^{i_{r-1}}+jp^{n-1})\subseteq H_{A}((0,p^{i_{r}}),vp^{i_{r}}+kp^{n-1}).\label{eq1}
\end{align}
If $(x,y)\in H_{A}((0,p^{i_{r}}),vp^{i_{r}}+kp^{n-1})$, then we have
\[yp^{i_{r}}\equiv vp^{i_{r}}+kp^{n-1}\pmod{p^{n}},\]
which leads to $y\equiv v+kp^{n-1-i_{r}}\pmod{p^{n-i_{r}}}$. Then
\[axp^{n-1}+yp^{i_{r-1}}\equiv up^{i_{r-1}}+jp^{n-1}\pmod{p^{n}}\]
for some $u\in S(r,v,k)$ and $j\in[0,p-1]$. Hence $(x,y)\in \cup_{u\in S(r,v,k)}\cup_{j=0}^{p-1}H_{A}((a,p^{i_{r-1}}),up^{i_{r-1}}+jp^{n-1})$. Therefore
\begin{align}\cup_{u\in S(r,v,k)}\cup_{j=0}^{p-1}H_{A}((a,p^{i_{r-1}}),up^{i_{r-1}}+jp^{n-1})\supseteq H_{A}((0,p^{i_{r}}),vp^{i_{r}}+kp^{n-1}).\label{eq2}
\end{align}
Combining (\ref{eq1}) and (\ref{eq2}), we have
\[\cup_{u\in S(r,v,k)}\cup_{j=0}^{p-1}H_{A}((a,p^{i_{r-1}}),up^{i_{r-1}}+jp^{n-1})=H_{A}((0,p^{i_{r}}),vp^{i_{r}}+kp^{n-1}).\]
This ends the proof of the claim.

Since $(a,p^{i_{1}}),(0,p^{i_{2}}),\dots,(0,p^{i_{s}})\in\mathcal{Z}_{A}$, by Lemma~\ref{lemma1}, we have
\begin{align*}
&|H_{A}((a,p^{i_{1}}),up^{i_{1}}+jp^{n-1})|=|H_{A}((a,p^{i_{1}}),up^{i_{1}}+j'p^{n-1})|,\\
&|H_{A}((0,p^{i_{r}}),vp^{i_{r}}+jp^{n-1})|=|H_{A}((0,p^{i_{r}}),vp^{i_{r}}+j'p^{n-1})|
\end{align*}
for all $u\in[0,p^{n-1-i_{1}}-1]$, $2\le r\le s$, $v\in[0,p^{n-1-i_{r}}-1]$ and $0\le j\ne j'\le p-1$. For any $(x,y)\in A$, we have $yp^{i_{s}}\equiv v_{s}p^{i_{s}}+jp^{n-1}$ for some $v_{s}\in[0,p^{n-1-i_{s}}-1],$ $j\in[0,p-1]$. Hence
\[A=\cup_{j=0}^{p-1}\cup_{v_{s}\in[0,p^{n-1-i_{s}}-1]} H_{A}((0,p^{i_{s}}),v_{s}p^{i_{s}}+jp^{n-1}).\]
Then we have
\begin{align*}
|A|=&|\cup_{j=0}^{p-1}\cup_{v_{s}\in[0,p^{n-1-i_{s}}-1]} H_{A}((0,p^{i_{s}}),v_{s}p^{i_{s}}+jp^{n-1})|\\
=&p|\cup_{v_{s}\in[0,p^{n-1-i_{s}}-1]} H_{A}((0,p^{i_{s}}),v_{s}p^{i_{s}})|\\
=&p|\cup_{j=0}^{p-1}\cup_{v_{s-1}\in S(s,v_{s},0)}\cup_{v_{s}\in[0,p^{n-1-i_{s}}-1]} H_{A}((0,p^{i_{s-1}}),v_{s-1}p^{i_{s-1}}+jp^{n-1})|\\
=&p^{2}|\cup_{v_{s-1}\in S(s,v_{s},0)}\cup_{v_{s}\in[0,p^{n-1-i_{s}}-1]} H_{A}((0,p^{i_{s-1}}),v_{s-1}p^{i_{s-1}})|\\
 =&\dots\\
 =&p^{s}|\cup_{v_{1}\in S(2,v_{2},0)}\dots\cup_{v_{s-1}\in S(s,v_{s},0)}\cup_{v_{s}\in[0,p^{n-1-i_{s}}-1]}H_{A}((a,p^{i_{1}}),v_{1}p^{i_{1}})|.
\end{align*}
Hence $p^{s}\mid |A|.$
\end{proof}

For $u,v\in\mathbb{Z}_{p}\times\mathbb{Z}_{p^{n}}$,
we define the relation $u\sim v$ if there exists $r\in\mathbb{Z}_{p^{n}}^{*}$ such that $u=rv$. Then the equivalent classes in $\mathbb{Z}_{p}\times\mathbb{Z}_{p^{n}}$ by $\sim$ are
\[(1,0),(c,p^{i})\text{ for all $c\in\mathbb{Z}_{p}$ and $i\in[0,n-1]$.}\]
By Lemma~\ref{prelemma3}, when we study the zero set $\mathcal{Z}_{A}$, we only need to consider the above equivalent classes.

\section{Tile$\Rightarrow$Spectral}\label{sectionts}
In this section, we will prove that if $A$ tiles $\mathbb{Z}_{p}\times\mathbb{Z}_{p^{n}}$, then $A$ is a spectral set in $\mathbb{Z}_{p}\times\mathbb{Z}_{p^{n}}$. By Lemma~\ref{prelemma2}, if $A$ is a nontrivial tiling set in $\mathbb{Z}_{p}\times\mathbb{Z}_{p^{n}}$, then $|A|\mid p^{n+1}$, hence $|A|=p^{i}$ for some $1\le i\le n$.
\begin{theorem}
Let $A\subseteq\mathbb{Z}_{p}\times\mathbb{Z}_{p^{n}}$ be a tiling set. If $|A|=p$, then $A$ is a spectral set.
\end{theorem}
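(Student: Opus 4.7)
The plan is to build a spectrum of $A$ directly from a single nonzero character that vanishes on $A$; the tile hypothesis will only be used to guarantee that such a character exists.

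First I would observe that $A$ is a nontrivial tile, since $1 < p < p^{n+1}$, so by Lemma~\ref{lemma4} we have $\mathcal{Z}_A \ne \emptyset$. I would then fix any $u \in \mathcal{Z}_A$ and set
\[
B := \{ku : k \in [0, p-1]\}.
\]

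Next I would verify the two hypotheses of Lemma~\ref{lemma3}(c). For $|B|=p$: every nonzero element of $\mathbb{Z}_p\times\mathbb{Z}_{p^n}$ has additive order equal to a positive power of $p$, so $ku = k'u$ with $k, k' \in [0,p-1]$ forces $p \mid k-k'$ and hence $k = k'$; the $p$ multiples are pairwise distinct. For $(B-B)\setminus\{0\}\subseteq \mathcal{Z}_A$: every nonzero element of $B-B$ has the form $(k-k')u$ with $k - k' \in \{\pm 1, \pm 2, \dots, \pm(p-1)\}$, which is coprime to $p^n$ and thus a unit in $\mathbb{Z}_{p^n}$; Lemma~\ref{prelemma3} then yields $(k-k')u \in \mathcal{Z}_A$.

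Combining $|A|=|B|=p$ with $(B-B)\setminus\{0\}\subseteq \mathcal{Z}_A$, Lemma~\ref{lemma3}(c) concludes that $(A,B)$ is a spectral pair, so $A$ is spectral. There is essentially no obstacle in this case: for $|A| = p$ a single witness $u \in \mathcal{Z}_A$ already generates a full spectrum cyclically, because every scalar in $\{1, \dots, p-1\}$ is automatically a unit modulo $p^n$; the deeper structural tools (the divisibility and equi-distributed properties) are not yet needed and will only become essential when handling larger tiles.
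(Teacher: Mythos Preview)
Your proposal is correct and follows essentially the same approach as the paper: pick any $u\in\mathcal{Z}_A$ (nonempty by Lemma~\ref{lemma4}), set $B=\{ku:k\in[0,p-1]\}$, and apply Lemma~\ref{lemma3}(c) using Lemma~\ref{prelemma3}. You have simply filled in the details (why $|B|=p$ and why every nonzero difference is a unit multiple of $u$) that the paper leaves implicit.
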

\begin{proof}
By Lemma~\ref{lemma4}, $\mathcal{Z}_{A}\ne\emptyset$. Let $(x,y)\in\mathcal{Z}_{A}$, define the set
\[B=\{r(x,y):\ r\in[0,p-1]\}.\]
Note that $(B-B)\backslash\{(0,0)\}\subseteq\mathcal{Z}_{A}$ and $|A|=|B|=p$, by Lemma~\ref{lemma3}, $(A,B)$ is a spectral pair.
\end{proof}

\begin{theorem}
Let $A\subseteq\mathbb{Z}_{p}\times\mathbb{Z}_{p^{n}}$ be a tiling set. If $|A|=p^{t}$, $2\le t\le n$, then $A$ is a spectral set.
\end{theorem}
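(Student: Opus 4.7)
The plan is to exhibit a subgroup $B\le\mathbb{Z}_p\times\mathbb{Z}_{p^n}$ of order $p^t$ such that $B\setminus\{(0,0)\}\subseteq\mathcal{Z}_A$; since then $B-B=B$ and $|B|=|A|=p^t$, Lemma~\ref{lemma3}(c) immediately gives the spectral pair $(A,B)$. The group $\mathbb{Z}_p\times\mathbb{Z}_{p^n}$ has exactly $p+1$ subgroups of order $p^t$, namely $B_\infty:=\{0\}\times p^{n-t}\mathbb{Z}_{p^n}$ and $B_c:=\langle(1,cp^{n-t}),(0,p^{n-t+1})\rangle$ for $c\in\mathbb{Z}_p$, so the task reduces to showing at least one of these lives inside $\mathcal{Z}_A\cup\{(0,0)\}$. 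Writing $S_X:=\{i\in[0,n-1]:(0,p^i)\in\mathcal{Z}_X\}$ and enumerating the $\sim$-classes of each subgroup (translated to element membership via Lemma~\ref{prelemma3}), I get explicit criteria: $B_\infty\setminus\{(0,0)\}\subseteq\mathcal{Z}_A$ iff $[n-t,n-1]\subseteq S_A$; for $c\in\mathbb{Z}_p^*$, $B_c\setminus\{(0,0)\}\subseteq\mathcal{Z}_A$ iff $[n-t+1,n-1]\subseteq S_A$ and $(c^{-1},p^{n-t})\in\mathcal{Z}_A$; and $B_0\setminus\{(0,0)\}\subseteq\mathcal{Z}_A$ iff $(1,0)\in\mathcal{Z}_A$ together with $(c,p^j)\in\mathcal{Z}_A$ for every $c\in\mathbb{Z}_p$ and $j\in[n-t+1,n-1]$.

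Structural constraints come from combining Lemma~\ref{lemma2} with the tile equation. Let $T$ be a tiling complement of $A$, so $|T|=p^{n-t+1}$. Lemma~\ref{lemma2} applied with $a=0$ to both $A$ and $T$ gives $|S_A|\le t$ and $|S_T|\le n-t+1$, and Lemma~\ref{prelemma2}(e) together with Lemma~\ref{prelemma3} forces $S_A\cup S_T=[0,n-1]$; consequently $|S_A|\in\{t-1,t\}$. Lemma~\ref{lemma2} also yields the sharper bound $|S_A\cap(i_1,n-1]|\le t-1$ whenever $(a,p^{i_1})\in\mathcal{Z}_A$, and similarly for $T$. The case analysis then proceeds as follows. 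If $[n-t,n-1]\subseteq S_A$, set $B=B_\infty$ and conclude. Otherwise pick $j^*\in[n-t,n-1]\setminus S_A$, so $(0,p^{j^*})\in\mathcal{Z}_T$; I would chain $(0,p^{j^*})$ with the other zeros in $S_T\cap(j^*,n-1]$ via Lemma~\ref{lemma2} for $T$, and use $|S_A|\ge t-1$ together with $|S_T|\le n-t+1$, to rule out $j^*>n-t$. Hence $j^*=n-t$ and $[n-t+1,n-1]\subseteq S_A$. If now some $(c',p^{n-t})\in\mathcal{Z}_A$ with $c'\ne0$, take $B=B_{(c')^{-1}}$. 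In the remaining subcase every $(c,p^{n-t})$ with $c\in\mathbb{Z}_p$ lies in $\mathcal{Z}_T$, and I would apply Lemma~\ref{lemma2} to $T$ once more using $(c,p^{n-t})$ as the leading entry, together with the equi-distributed property (Lemma~\ref{lemma1}), to force $(1,0)$ and each $(c,p^j)$ with $c\in\mathbb{Z}_p^*$, $j\in[n-t+1,n-1]$ into $\mathcal{Z}_A$, so that $B=B_0$ works.

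The main obstacle is expected to be this last subcase, where $B_\infty$ and every cyclic $B_c$ ($c\in\mathbb{Z}_p^*$) fail and we must show $B_0$ succeeds. It requires $p(t-1)+1$ distinct $\sim$-classes to cooperate inside $\mathcal{Z}_A$, and the divisibility bounds from Lemma~\ref{lemma2} alone are not tight enough to decide them. The equi-distributed property (Lemma~\ref{lemma1}) should be indispensable here: it translates the assumed failures $(c,p^{n-t})\in\mathcal{Z}_T$ into rigid fiber-size identities for $A$, and these identities, combined with the divisibility squeeze coming from the maxed-out $|T|=p^{n-t+1}$, should rule out any alternative placement of the remaining $(1,0)$ and $(c,p^j)$ classes and push all of them into $\mathcal{Z}_A$.
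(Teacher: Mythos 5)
Your opening reduction is where the argument breaks down: you assume that a spectral set of size $p^{t}$ must admit a \emph{subgroup} of order $p^{t}$ as a spectrum, and you correctly enumerate the $p+1$ candidate subgroups, but a spectrum need not be (a translate of) a subgroup, and there are tiles of size $p^{t}$ for which all $p+1$ subgroups fail. Concretely, take $n=3$, $t=2$, $A=\{(0,s_{0}+s_{1}p^{2}):\ s_{0},s_{1}\in[0,p-1]\}$ with tiling complement $T=\{(a,up):\ a,u\in[0,p-1]\}$. A direct computation gives $\mathcal{Z}_{A}=\{(c,d):\ p\nmid d\}\cup\{(c,d):\ d\in p^{2}\mathbb{Z}_{p^{3}}\setminus\{0\}\}$, so $S_{A}=\{0,2\}$, while $(c,p)\notin\mathcal{Z}_{A}$ for every $c\in\mathbb{Z}_{p}$ and $(1,0)\notin\mathcal{Z}_{A}$. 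Hence $B_{\infty}$ fails (it contains the class $(0,p)$), each cyclic $B_{c}$ with $c\ne0$ fails (its generator $(1,cp)$ lies in the class $(c^{-1},p)$), and $B_{0}$ fails (it contains $(1,0)$). This example lands precisely in your final subcase, where you hope to force $(1,0)$ into $\mathcal{Z}_{A}$; that forcing is impossible here, so no combination of Lemma~\ref{lemma2} and Lemma~\ref{lemma1} can close that case. (Several intermediate steps are also only sketched --- ``I would chain'', ``should rule out'' --- but the obstruction above is structural, not a matter of missing detail.)

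The paper avoids this trap by not restricting to subgroups. With $I=S_{A}$, in the case $|I|=t$ it takes the digit set $B=\{\sum_{i\in I}s_{i}(0,p^{i}):\ s_{i}\in[0,p-1]\}$, which for non-interval $I$ is not a subgroup (in the example above $B=A$ itself), yet every difference $b-b'$ is $\sim(0,p^{j})$ with $j=\min\{i\in I:\ r_{i}\ne0\}\in I$, so Lemma~\ref{lemma3}(c) applies. In the case $|I|=t-1$ it runs a three-way analysis on where the classes $(c,p^{i})$ with $c\ne0$ fall between $\mathcal{Z}_{A}$ and $\mathcal{Z}_{T}$, deriving contradictions by exhibiting a would-be spectrum of size $p^{n-t+2}>|T|$ for $T$; your plan has no analogue of either mechanism. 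To salvage your approach you would need to enlarge the family of candidate spectra from the $p+1$ subgroups to these digit-type sets, at which point you essentially recover the paper's proof.
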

\begin{proof}
Suppose $(A,T)$ forms a tiling pair in $\mathbb{Z}_{p}\times\mathbb{Z}_{p^{n}}$, then $|T|=p^{n-t+1}$.
Let $I=\{i\in[0,n-1]:\ (0,p^{i})\in\mathcal{Z}_{A}\}$ and $J=\{i\in[0,n-1]:\ (0,p^{i})\in\mathcal{Z}_{T}\}$.
 By Lemma~\ref{lemma2}, we have $|I|\le t$ and $|J|\le n-t+1$.
 On the other hand, $\mathcal{Z}_{A}\cup\mathcal{Z}_{T}=\mathbb{Z}_{p}\times\mathbb{Z}_{p^{n}}\backslash\{(0,0)\}$, then $|I|+|J|\ge n$. Hence $t-1\le|I|\le t$.

If $|I|=t$, define
\[B=\{\sum_{i\in I}s_{i}(0,p^{i}): s_{i}\in[0,p-1]\}.\]
For any $b\ne b'\in B$, $b-b'=\sum_{i\in I}r_{i}(0,p^{i})$, where $r_{i}\in[-p+1,p-1]$.
Note that $\sum_{i\in I}r_{i}(0,p^{i})\sim(0,p^{j})$, where $j=\min\{i\in I: r_{i}\ne0\}$, then $(B-B)\backslash\{(0,0)\}\subseteq\mathcal{Z}_{A}$. Since $|B|=p^{t}=|A|$, then we have $(A,B)$ is a spectral pair.

If $|I|=t-1$, then $|J|=n-t+1$. Suppose $I=\{a_{1},a_{2},\dots,a_{t-1}\}$ and $J=\{b_{1},b_{2},\dots,b_{n-t+1}\}$, where $0\le a_{1}<\dots<a_{t-1}\le n-1$, $0\le b_{1}<\dots<b_{n-t+1}\le n-1$ and $a_{i}\ne b_{j}$. Now we divide our discussion into three cases.

 {\bf{Case 1: There exist $d\in\mathbb{Z}_{p}$ and $a_{k}$ such that $(d,p^{a_{k}})\in\mathcal{Z}_{T}$ and $(c,p^{b_{j}})\in\mathcal{Z}_{T}$ for all $c\in\mathbb{Z}_{p}$, $b_{j}<a_{k}$.}}

Define
\[B=\{s_{0}(d,p^{a_{k}})+\sum_{i=1}^{n-t+1}s_{i}(0,p^{b_{i}}): s_{i}\in[0,p-1]\}.\]
For any $b\ne b'\in B$, $b-b'=r_{0}(d,p^{a_{k}})+\sum_{i=1}^{n-t+1}r_{i}(0,p^{b_{i}})$, where $r_{i}\in[-p+1,p-1]$.
Note that
 \[r_{0}(d,p^{a_{k}})+\sum_{i=1}^{n-t+1}r_{i}(0,p^{b_{i}})\sim\begin{cases}(d,p^{a_{k}}),&\textup{ if } r_{0}\ne0\text{ and } r_{i}=0\text{ for all }b_{i}<a_{k};\\
(r_{l}^{-1}r_{0}d,p^{b_{l}}),&\textup{ if }r_{0}\ne0,\ b_{l}<a_{k},\ r_{l}\ne0\text{ and }r_{i}=0\text{ for all }1\le i<l;\\
(0,p^{b_{l}}),&\textup{ if }r_{0}=0,\ r_{l}\ne0\text{ and }r_{i}=0\text{ for all }1\le i<l.\end{cases}\]
Then $(B-B)\backslash\{(0,0)\}\subseteq\mathcal{Z}_{T}$,
hence $B$ is a spectrum for $T$, but $|B|=p^{n-t+2}>|T|$, which is a contradiction.

 {\bf{Case 2: There exist $d\in\mathbb{Z}_{p}$ and $b_{k}$ such that $(d,p^{b_{k}})\in\mathcal{Z}_{A}$ and $(c,p^{a_{j}})\in\mathcal{Z}_{A}$ for all $c\in\mathbb{Z}_{p}$, $a_{j}<b_{k}$.}}

Define
\[B=\{s_{0}(d,p^{b_{k}})+\sum_{i=1}^{t-1}s_{i}(0,p^{a_{i}}): s_{i}\in[0,p-1]\}.\]
For any $b\ne b'\in B$, $b-b'=r_{0}(d,p^{b_{k}})+\sum_{i=1}^{t-1}r_{i}(0,p^{a_{i}})$, where $r_{i}\in[-p+1,p-1]$. Note that
 \[r_{0}(d,p^{b_{k}})+\sum_{i=1}^{t-1}r_{i}(0,p^{a_{i}})\sim\begin{cases}(d,p^{b_{k}}),&\textup{ if } r_{0}\ne0\text{ and } r_{i}=0\text{ for all }a_{i}<b_{k};\\
(r_{l}^{-1}r_{0}d,p^{a_{l}}),&\textup{ if }r_{0}\ne0,\ a_{l}<b_{k},\ r_{l}\ne0\text{ and }r_{i}=0\text{ for all }1\le i<l;\\
(0,p^{a_{l}}),&\textup{ if }r_{0}=0,\ r_{l}\ne0\text{ and }r_{i}=0\text{ for all }i<l.\end{cases}\]
Then $(B-B)\backslash\{(0,0)\}\subseteq\mathcal{Z}_{A}$. Since $|B|=p^{t}=|A|$, then we have $(A,B)$ is a spectral pair.

{\bf{Case 3: $\{(c,p^{a_{i}}): c\in\mathbb{Z}_{p},i\in[1,t]\}\subseteq\mathcal{Z}_{A}$, and $\{(c,p^{b_{i}}): c\in\mathbb{Z}_{p},i\in[1,n-t+1]\}\subseteq\mathcal{Z}_{T}$.}}

 If $(1,0)\in\mathcal{Z}_{T}$, define
\[B=\{s_{0}(1,0)+\sum_{i=1}^{n-t+1}s_{i}(0,p^{b_{i}}): s_{i}\in[0,p-1]\}.\]
For any $b\ne b'\in B$, $b-b'=r_{0}(1,0)+\sum_{i=1}^{n-t+1}r_{i}(0,p^{b_{i}})$, where $r_{i}\in[-p+1,p-1]$. Note that
 \[r_{0}(1,0)+\sum_{i=1}^{n-t+1}r_{i}(0,p^{b_{i}})\sim\begin{cases}(1,0),&\textup{ if }r_{0}\ne0\text{ and } r_{i}=0\text{ for all }i\ge1;\\
(r_{l}^{-1}r_{0},p^{b_{l}}),&\textup{ if }r_{0}\ne0,\ r_{l}\ne0\text{ and }r_{i}=0\text{ for all }1\le i<l;\\
(0,p^{b_{l}}),&\textup{ if }r_{0}=0,\ r_{l}\ne0\text{ and }r_{i}=0\text{ for all }1\le i<l.\end{cases}\]
Then $(B-B)\backslash\{(0,0)\}\subseteq\mathcal{Z}_{T}$,
 $B$ is a spectrum for $T$, but $|B|=p^{n-t+2}>|T|$, which is a contradiction. Hence $(1,0)\in\mathcal{Z}_{A}$, define
\[C=\{s_{0}(1,0)+\sum_{i=1}^{t-1}s_{i}(0,p^{a_{i}}): s_{i}\in[0,p-1]\}.\]
For any $c\ne c'\in C$, $c-c'=r_{0}(1,0)+\sum_{i=1}^{t-1}r_{i}(0,p^{a_{i}})$, where $r_{i}\in[-p+1,p-1]$. Note that
 \[r_{0}(1,0)+\sum_{i=1}^{t-1}r_{i}(0,p^{a_{i}})\sim\begin{cases}(1,0),&\textup{ if }r_{0}\ne0\text{ and } r_{i}=0\text{ for all }i\ge1;\\
(r_{l}^{-1}r_{0},p^{a_{l}}),&\textup{ if }r_{0}\ne0,\ r_{l}\ne0\text{ and }r_{i}=0\text{ for all }1\le i<l;\\
(0,p^{a_{l}}),&\textup{ if }r_{0}=0,\ r_{l}\ne0\text{ and }r_{i}=0\text{ for all }1\le i<l.\end{cases}\]
Then $(C-C)\backslash\{(0,0)\}\subseteq\mathcal{Z}_{A}$. Since $|C|=p^{t}=|A|$, we have $(A,C)$ is a spectral pair.

We claim that the tiling pair $(A,T)$ satisfies at least one of Case 1, Case 2 and Case 3.
Assume that Case 1 and Case 2 fail. By Lemmas~\ref{prelemma2} and \ref{lemma2}, $\{(c,p^{a_{i}}): c\in[0,p-1],a_{i}<b_{1}\}\subseteq\mathcal{Z}_{A}$. Since Case 2 fails, then $\{(c,p^{b_{1}}): c\in[0,p-1]\}\subseteq\mathcal{Z}_{T}$. Repeating above argument, we get that Case 3 holds. This completes the proof.
\end{proof}
\section{Spectral$\Rightarrow$Tile}\label{sectionst}
In this section, we will prove that if $A$ is a spectral set in $\mathbb{Z}_{p}\times\mathbb{Z}_{p^{n}}$, then $A$ tiles $\mathbb{Z}_{p}\times\mathbb{Z}_{p^{n}}$.
Note that $\mathcal{Z}_{A}\ne\emptyset$, then $p\mid|A|$.
\begin{theorem}
Let $A\subseteq\mathbb{Z}_{p}\times\mathbb{Z}_{p^{n}}$ be a spectral set. If $|A|>p^{n}$, then $A=\mathbb{Z}_{p}\times\mathbb{Z}_{p^{n}}$.
\end{theorem}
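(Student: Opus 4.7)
Let $B$ be a spectrum of $A$. By Lemma~\ref{lemma3}, $|B|=|A|>p^{n}$ and $(B-B)\setminus\{0\}\subseteq\mathcal{Z}_{A}$. I argue by contradiction: assume $A\neq G$, so that $A^{c}:=G\setminus A$ is nonempty. Since $\chi_{g}(G)=0$ for every nontrivial character, $\chi_{g}(A^{c})=-\chi_{g}(A)$, whence $\mathcal{Z}_{A}\setminus\{0\}=\mathcal{Z}_{A^{c}}\setminus\{0\}$, and in particular $(B-B)\setminus\{0\}\subseteq\mathcal{Z}_{A^{c}}$.

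The principal step is an orthogonality computation on the complement. For $b\neq b'$ in $B$,
\[
\langle\chi_{b},\chi_{b'}\rangle_{L^{2}(A^{c})}=\sum_{g\in A^{c}}\chi_{b-b'}(g)=\chi_{b-b'}(A^{c})=0,
\]
so $\{\chi_{b}|_{A^{c}}:b\in B\}$ is a family of $|B|$ mutually orthogonal nonzero vectors (each of squared norm $|A^{c}|>0$) in the $|A^{c}|$-dimensional space $\mathbb{C}^{A^{c}}$. Counting dimensions forces $|A|=|B|\le|A^{c}|=|G|-|A|$, i.e.\ $|A|\le p^{n+1}/2$.

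The plan is to combine this upper bound with Lemma~\ref{lemma2}. From $|\mathcal{Z}_{A}|\ge|B-B|-1\ge|A|-1>p^{n}-1$, together with the orbit decomposition of $\mathcal{Z}_{A}$ under $\mathbb{Z}_{p^{n}}^{*}$ (Lemma~\ref{prelemma3}) and the equi-distribution imposed by each zero character (Lemma~\ref{lemma1}), I would show that $\mathcal{Z}_{A}$ must contain a chain $(a,p^{i_{1}}),(0,p^{i_{2}}),\dots,(0,p^{i_{n}})$ spanning all $n$ possible levels $\{i_{1},\dots,i_{n}\}=\{0,1,\dots,n-1\}$. Lemma~\ref{lemma2} then gives $p^{n}\mid|A|$, so writing $|A|=kp^{n}$ with $k\ge 2$, the orthogonality bound forces $k\le p/2$. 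For $p=2$ this already contradicts $k\ge 2$, and for $p=3$ the bound $k\le 3/2<2$ is again contradictory. For $p\ge5$ the remaining candidates $k\in\{2,\dots,\lfloor p/2\rfloor\}$ are excluded by a further analysis in which each such $k$ would impose on $A$ a rigid fiber pattern (via Lemma~\ref{lemma1}) incompatible with the orbit structure already imposed on $\mathcal{Z}_{A}$; in every case the conclusion is $A=G$.

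The hard part will be guaranteeing the existence of the depth-$n$ chain in $\mathcal{Z}_{A}$. The raw bound $|\mathcal{Z}_{A}|\ge p^{n}$ is not tight, since a single low-level orbit at $i=0$ already contains $(p-1)p^{n-1}$ elements and could in principle absorb most of $\mathcal{Z}_{A}$ without covering every level. The argument must use Lemma~\ref{lemma1} iteratively---each $u\in\mathcal{Z}_{A}$ forces equidistribution of $A$ across cosets of $p^{n-1}\mathbb{Z}_{p^{n}}$ in the $u$-direction---to propagate coverage upward through the levels, precisely exploiting $|A|>p^{n}$. The residual case analysis for $p\ge5$ is the secondary technical obstacle and likely requires dichotomizing on whether $A$ meets each index-$p$ subgroup of $\mathbb{Z}_{p}\times\mathbb{Z}_{p^{n}}$.
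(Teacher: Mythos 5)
Your opening steps are fine (the orthogonality of $\{\chi_b|_{A^c}\}$ does give $|A|\le p^{n+1}/2$), but the proof then stops being a proof: the two load-bearing claims are announced as plans rather than established. First, you say you ``would show'' that $\mathcal{Z}_{A}$ contains a chain $(a,p^{i_1}),(0,p^{i_2}),\dots,(0,p^{i_n})$ covering all $n$ levels; you yourself identify this as the hard part, and no argument is given. Note also that Lemma~\ref{lemma2} needs the zeros beyond the first to have first coordinate $0$, so even exhibiting one zero at each level $i$ would not suffice without controlling which representative of each $\sim$-class you have. Second, for $p\ge 5$ the candidates $|A|=kp^n$ with $2\le k\le \lfloor p/2\rfloor$ are ``excluded by a further analysis'' that is never carried out. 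As written, the argument only closes for $p\in\{2,3\}$.

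The missing idea is that the hypothesis $|B|>p^{n}$ should be fed into a pigeonhole argument on the coordinates of $B$ itself, not into a count of $|\mathcal{Z}_A|$. That is what the paper does: since the second coordinates take only $p^{n}$ values, two elements of $B$ share one, so $(1,0)\in\mathcal{Z}_A$; and for each class representative $(c_0,p^{i_0})$ one writes every group element uniquely as $\sum_{i\ne i_0}x_i(0,p^i)+x_n(1,0)+x_{i_0}(c_0,p^{i_0})$ with $x_i\in[0,p-1]$, so that $|B|>p^{n}$ forces two elements of $B$ agreeing in all coordinates except $x_{i_0}$, whence $(c_0,p^{i_0})\in\mathcal{Z}_A$. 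This puts \emph{every} nonzero equivalence class in $\mathcal{Z}_A$, and Fourier inversion gives $A=G$ directly --- no divisibility lemma, no bound $|A|\le p^{n+1}/2$, and no case split on $p$ are needed. If you redirect your ``propagate coverage through the levels'' step into this pigeonhole on $B$, your argument collapses to the paper's proof; in its current form it has two unfilled gaps.
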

\begin{proof}
Suppose $(A,B)$ is a spectral pair in $\mathbb{Z}_{p}\times\mathbb{Z}_{p^{n}}$, then $|A|=|B|>p^{n}$. By pigeonhole principle, there exist $(b,x),(b',x)\in B$ with $b\ne b'$. Then
\[(b,x)-(b',x)\sim(1,0)\in\mathcal{Z}_{A}.\]
For any $i_{0}\in[0, n-1]$ and $c_{0}\in\mathbb{Z}_{p}$, $x\in\mathbb{Z}_{p}\times\mathbb{Z}_{p^{n}}$, $x$ can be represented as
\[x=\sum_{i\in[0,n-1]\backslash\{i_{0}\}}x_{i}(0,p^{i})+x_{n}(1,0)+x_{i_{0}}(c_{0},p^{i_{0}}),\]
where $x_{i}\in[0,p-1]$.
Since $|B|>p^{n}$, by pigeonhole principle, there exist $u,v\in B$ such that
\begin{align*}
&u=\sum_{i\in[0,n-1]\backslash\{i_{0}\}}u_{i}(0,p^{i})+u_{n}(1,0)+u_{i_{0}}(c_{0},p^{i_{0}}),\\
&v=\sum_{i\in[0,n-1]\backslash\{i_{0}\}}u_{i}(0,p^{i})+u_{n}(1,0)+v_{i_{0}}(c_{0},p^{i_{0}}),
\end{align*}
with $u_{i_0}\ne v_{i_{0}}$, then we obtain
\[u-v\sim(c_{0},p^{i_{0}})\in\mathcal{Z}_{A}.\]
Thus, $\mathcal{Z}_{A}=\mathbb{Z}_{p}\times\mathbb{Z}_{p^{n}}\backslash\{(0,0)\}$. By the Fourier inversion formula, $A=\mathbb{Z}_{p}\times\mathbb{Z}_{p^{n}}$.
\end{proof}

It is easy to see that the set $A=\mathbb{Z}_{p}\times\mathbb{Z}_{p^{n}}$ is a spectral set. Now we consider the spectral set $A\subseteq\mathbb{Z}_{p}\times\mathbb{Z}_{p^{n}}$ with $|A|\le p^{n}$.

\begin{theorem}
Let $A\subseteq\mathbb{Z}_{p}\times\mathbb{Z}_{p^{n}}$ be a spectral set. If $|A|=p$, then $A$ is a tiling set.
\end{theorem}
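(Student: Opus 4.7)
My plan is to induct on $n$, with base case $n=1$ (the group $\mathbb{Z}_p\times\mathbb{Z}_p$) handled by \cite{IMP17}. For the inductive step, let $(A,B)$ be a spectral pair with $|A|=|B|=p$; by Lemma~\ref{lemma3}(d) we may translate so that $0\in A\cap B$. Write $G[p]:=\{g\in G:pg=0\}$ for the $p$-torsion subgroup. The argument splits according to whether $\mathcal{Z}_A$ contains a nonzero element of $G[p]$.

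If some $h\in\mathcal{Z}_A\cap G[p]$ is nonzero, then by Lemma~\ref{prelemma3} the orbit $\{rh:r\in\mathbb{Z}_{p^n}^{*}\}$ lies in $\mathcal{Z}_A$, and since $h$ has order $p$ this orbit equals $\langle h\rangle\setminus\{0\}$. The identity $\chi_h(A)=0$ is a vanishing sum of $|A|=p$ values in the $p$-th roots of unity, and Lemma~\ref{lemma1} (or equivalently the irreducibility of $1+x+\cdots+x^{p-1}$) forces each $p$-th root to appear exactly once among $\{\chi_h(a):a\in A\}$. Hence $\chi_h$ restricts to a bijection from $A$ onto the $p$-th roots of unity, exhibiting $A$ as a transversal of the index-$p$ subgroup $\ker\chi_h$, and $A\oplus\ker\chi_h=G$.

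Otherwise $\mathcal{Z}_A\cap G[p]=\{0\}$. Choose any $z\in\mathcal{Z}_A\setminus\{0\}$; scaling by a unit (Lemma~\ref{prelemma3}) we may assume $z=(c,p^{i})$, and the current hypothesis forces $i\le n-2$. By Lemma~\ref{lemma1} with $|A|=p$, the values $\langle z,a\rangle$ for $a\in A$ occupy a single fiber modulo $p^{n-1}$ and hit each of its $p$ points exactly once, so $\{\langle z,a\rangle\}_{a\in A}=\{r_0+jp^{n-1}:j\in[0,p-1]\}$. Reducing modulo $p^{n-1}$ gives $p^{i}a_2\equiv r_0\pmod{p^{n-1}}$ for every $a=(a_1,a_2)\in A$, and because $i\le n-2$ this pins $a_2$ down modulo $p$. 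After a further translation of $A$ we obtain $A\subseteq H:=\mathbb{Z}_p\times p\mathbb{Z}_{p^n}\cong\mathbb{Z}_p\times\mathbb{Z}_{p^{n-1}}$.

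To invoke the inductive hypothesis inside $H$, let $N:=\langle(0,p^{n-1})\rangle$; this is the order-$p$ subgroup of $G$ consisting of those characters trivial on $H$, and $N\subseteq G[p]$, so $N\cap\mathcal{Z}_A=\{0\}$ by the current hypothesis. Consequently no two distinct elements of $B$ differ by a member of $N$, so the restriction map $\widehat{G}\to\widehat{H}$ sends $B$ bijectively onto a spectrum of $A$ inside $H$. The inductive hypothesis then supplies $T'\subseteq H$ with $A\oplus T'=H$, and combining $T'$ with any transversal $R$ of $H$ in $G$ gives $A\oplus(T'+R)=G$. I expect the main obstacle to be the second case: extracting from the single identity $\chi_z(A)=0$ enough structure to force $A$ into a proper index-$p$ subgroup, a step driven by the equi-distributed property (Lemma~\ref{lemma1}) combined with the constraint $i\le n-2$ inherited from the absence of order-$p$ zeros in $\mathcal{Z}_A$.
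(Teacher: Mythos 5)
Your proof is correct, but it takes a genuinely different route from the paper's. The paper splits on the $\sim$-class of a zero of $A$ --- $(1,0)$, $(0,p^{s})$, or $(c,p^{s})$ with $c\ne0$ --- and in each case writes down an explicit tiling complement $T$ of size $p^{n}$, then verifies $\mathcal{Z}_{A}\cup\mathcal{Z}_{T}=(\mathbb{Z}_{p}\times\mathbb{Z}_{p^{n}})\backslash\{(0,0)\}$ by computing all the fiber counts $|H_{T}(\cdot,\cdot)|$ and invoking Lemma~\ref{lemma1} and Lemma~\ref{prelemma2}(e); no induction is used. You instead split on whether $\mathcal{Z}_{A}$ meets the $p$-torsion $G[p]\setminus\{0\}$ (whose $\sim$-classes are exactly $(1,0)$ and $(c,p^{n-1})$): in the torsion case the vanishing sum of $p$ many $p$-th roots of unity forces $A$ to be a transversal of the index-$p$ subgroup $\ker\chi_{h}$, which is itself the tiling complement; in the non-torsion case equi-distribution over the unique occupied fiber pins the second coordinate of $A$ modulo $p$ (using $i\le n-2$), pushing $A$ into $H\cong\mathbb{Z}_{p}\times\mathbb{Z}_{p^{n-1}}$, where the spectrum descends because $B$ injects into $\widehat{H}$ (the annihilator $N=\langle(0,p^{n-1})\rangle$ lies in $G[p]$ and so misses $\mathcal{Z}_{A}$), and induction finishes. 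Your argument is shorter and more structural --- it explains \emph{why} a complement exists rather than exhibiting one --- at the cost of an induction and an appeal to the $\mathbb{Z}_{p}\times\mathbb{Z}_{p}$ base case (which your torsion case in fact already covers, since $G[p]=G$ when $n=1$); the paper's version is self-contained for each $n$ and produces explicit complements in the same style it reuses for spectral sets of larger size.
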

\begin{proof}
Note that $\mathcal{Z}_{A}\ne\emptyset$, we divide our discussion into three cases.

{\bf{Case 1: $(1,0)\in\mathcal{Z}_{A}$}.}

Define
\[T=\{(0,x):\ x\in\mathbb{Z}_{p^{n}}\}.\]
For any $i\in[0,n-1]$, $c\in\mathbb{Z}_{p}$, $t\in[0,p^{n-1-i}-1]$ and $j\in[0,p-1]$, if $(0,x)\in H_{T}((c,p^{i}),tp^{i}+jp^{n-1})$, then $xp^{i}\equiv tp^{i}+jp^{n-1}$, which leads to $x\equiv t+jp^{n-1-i}\pmod{p^{n-i}}$. Hence
\[|H_{T}((c,p^{i}),tp^{i}+jp^{n-1})|=p^{i}\]
for any $i\in[0,n-1]$, $c\in\mathbb{Z}_{p}$, $t\in[0,p^{n-1-i}-1]$ and $j\in[0,p-1].$
By Lemma~\ref{lemma1},
\[(c,p^{i})\in\mathcal{Z}_{T}\text{ for all }i\in[0,n-1],\ c\in\mathbb{Z}_{p}.\]
Then we have
\[\mathcal{Z}_{A}\cup\mathcal{Z}_{T}=(\mathbb{Z}_{p}\times\mathbb{Z}_{p^{n}})\backslash\{(0,0)\}.\]
Note that $|A||T|=p^{n+1}$, by Lemma~\ref{prelemma2}, $(A,T)$ is a tiling pair.

{\bf{Case 2: there exists $s\in[0,n-1]$ such that $(0,p^{s})\in\mathcal{Z}_{A}$.}}

Define
\[T=\{(x,\sum_{i\in[0,n-1]\backslash\{n-s-1\}}y_{i}p^{i}:\ x,y_{i}\in[0,p-1]\}.\]
For any $j\in[0,p-1]$ and $t\in[0,p^{n-1}-1]$, if $(x,\sum_{i\in[0,n-1]\backslash\{n-s-1\}}y_{i}p^{i})\in H_{T}((1,0),t+jp^{n-1})$, then
\begin{align*}
&x\equiv j\pmod{p},\\
&t=0.
\end{align*}
 Hence
\[|H_{T}((1,0),t+jp^{n-1})|=\begin{cases}p^{n-1},&\text{ if }t=0;\\
0,&\text{ if }t\ne0,\end{cases}\]
for any $j\in[0,p-1]$ and $t\in[0,p^{n-1}-1]$.
By Lemma~\ref{lemma1},
 \begin{align}
 (1,0)\in\mathcal{Z}_{T}.\label{eq4}
 \end{align}
 For any $k\in[0,n-1]\backslash\{s\}$, $c\in\mathbb{Z}_{p}$, $t\in[0,p^{n-1-k}-1]$ and $j\in[0,p-1]$, if $(x,\sum_{i\in[0,n-1]\backslash\{n-s-1\}}y_{i}p^{i})\in H_{T}((c,p^{k}),tp^{k}+jp^{n-1})$, then
 \[cxp^{n-1}+(\sum_{i\in[0,n-1]\backslash\{n-s-1\}}y_{i}p^{i})p^{k}\equiv tp^{k}+jp^{n-1}\pmod{p^{n}},\]
 which leads to
 \begin{align*}
 &cx+y_{n-1-k}\equiv j\pmod{p},\\
 &\sum_{i\in[0,n-1]\backslash\{n-s-1\}}y_{i}p^{i}\equiv t\pmod{p^{n-1-k}}.
\end{align*}
Then we have
\[|H_{T}((c,p^{k}),tp^{k}+jp^{n-1})|=\begin{cases}p^{k+1},&\text{ if }k<s\text{ and }t[n-s-1]=0;\\
0,&\text{ if }k<s\text{ and }t[n-s-1]\ne0;\\
p^{k},&\text{ if }k>s,\end{cases}\]
for $k\in[0,n-1]\backslash\{s\}$, $c\in\mathbb{Z}_{p}$, $t\in[0,p^{n-1-k}-1]$ and $j\in[0,p-1]$.
By Lemma~\ref{lemma1},
\begin{align}
(c,p^{k})\in\mathcal{Z}_{T}\text{ for all }k\in[0,n-1]\backslash\{s\}, c\in\mathbb{Z}_{p}. \label{eq5}
\end{align}
For any $c\in\mathbb{Z}_{p}^{*}$, $t\in[0,p^{n-1-s}-1]$ and $j\in[0,p-1]$, if $(x,\sum_{i\in[0,n-1]\backslash\{n-s-1\}}y_{i}p^{i})\in H_{T}((c,p^{s}),tp^{s}+jp^{n-1})$, then
 \[cxp^{n-1}+(\sum_{i\in[0,n-1]\backslash\{n-s-1\}}y_{i}p^{i})p^{s}\equiv tp^{s}+jp^{n-1}\pmod{p^{n}},\]
 which leads to
 \begin{align*}
 &\sum_{i\in[0,n-s-2]}y_{i}p^{i}\equiv t\pmod{p^{n-s-1}},\\
 &cx\equiv j\pmod{p}.
 \end{align*}
Then we have
\[|H_{T}((c,p^{s}),tp^{s}+jp^{n-1})|=p^{s}\]
for all $c\in\mathbb{Z}_{p}^{*}$, $t\in[0,p^{n-1-s}-1]$ and $j\in[0,p-1]$.
By Lemma~\ref{lemma1},
\begin{align}
(c,p^{s})\in\mathcal{Z}_{T}\text{ for all }c\in\mathbb{Z}_{p}^{*}.\label{eq6}
\end{align}
Combining (\ref{eq4}), (\ref{eq5}) and (\ref{eq6}), we have
\[\mathcal{Z}_{A}\cup\mathcal{Z}_{T}=(\mathbb{Z}_{p}\times\mathbb{Z}_{p^{n}})\backslash\{(0,0)\}.\]
Note that $|A||T|=p^{n+1}$, by Lemma~\ref{prelemma2}, $(A,T)$ is a tiling pair.

{\bf{Case 3: there exist $c\in\mathbb{Z}_{p}^{*}$ and $s\in[0,n-1]$ such that $(c,p^{s})\in\mathcal{Z}_{A}$}.}

Define
\[T=\{(-c^{-1}y_{n-s-1},\sum_{i=0}^{n-1}y_{i}p^{i}):\ y_{i}\in[0,p-1]\}.\]
For any $t\in[0,p^{n-1}-1]$ and $j\in[0,p-1]$, if $(-c^{-1}y_{n-s-1},\sum_{i=0}^{n-1}y_{i}p^{i})\in H_{T}((1,0),t+jp^{n-1})$, then
\begin{align*}
&-c^{-1}y_{n-s-1}\equiv j\pmod{p},\\
&t=0.
\end{align*}
Hence
\[|H_{T}((1,0),t+jp^{n-1})|=\begin{cases}p^{n-1},&\text{ if }t=0;\\
0,&\text{ if }t\ne0,\end{cases}\]
for any $t\in[0,p^{n-1}-1]$ and $j\in[0,p-1]$.
By Lemma~\ref{lemma1},
 \begin{align}
 (1,0)\in\mathcal{Z}_{T}.\label{eq7}
 \end{align}
 For any $k\in[0,n-1]\backslash\{s\}$, $d\in\mathbb{Z}_{p}$, $t\in[0,p^{n-1-k}-1]$ and $j\in[0,p-1]$, if $(-c^{-1}y_{n-s-1},\sum_{i=0}^{n-1}y_{i}p^{i})\in H_{T}((d,p^{k}),tp^{k}+jp^{n-1})$, then
 \[-dc^{-1}y_{n-s-1}p^{n-1}+(\sum_{i=0}^{n-1}y_{i}p^{i})p^{k}\equiv tp^{k}+jp^{n-1}\pmod{p^{n}},\]
 which leads to
 \begin{align*}
 &-dc^{-1}y_{n-s-1}+y_{n-1-k}\equiv j\pmod{p},\\
 &\sum_{i=0}^{n-2-k}y_{i}p^{i}\equiv t\pmod{p^{n-1-k}}.
\end{align*}
Then we have
\[|H_{T}((d,p^{k}),tp^{k}+jp^{n-1})|=p^{k}\]
for $k\in[0,n-1]\backslash\{s\}$, $d\in\mathbb{Z}_{p}$, $t\in[0,p^{n-1-k}-1]$ and $j\in[0,p-1]$.
By Lemma~\ref{lemma1},
\begin{align}
(d,p^{k})\in\mathcal{Z}_{T}\text{ for all }k\in[0,n-1]\backslash\{s\}, d\in\mathbb{Z}_{p}. \label{eq8}
\end{align}
For any $d\in\mathbb{Z}_{p}$, $d\ne c$, $t\in[0,p^{n-1-s}-1]$ and $j\in[0,p-1]$, if $(-c^{-1}y_{n-s-1},\sum_{i=0}^{n-1}y_{i}p^{i})\in H_{T}((d,p^{s}),tp^{s}+jp^{n-1})$, then
 \[-dc^{-1}y_{n-s-1}p^{n-1}+(\sum_{i=0}^{n-1}y_{i}p^{i})p^{s}\equiv tp^{s}+jp^{n-1}\pmod{p^{n}},\]
 which leads to
 \begin{align*}
 &\sum_{i=0}^{n-s-2}y_{i}p^{i}\equiv t\pmod{p^{n-s-1}},\\
 &-dc^{-1}y_{n-s-1}+y_{n-s-1}\equiv(1-dc^{-1})y_{n-s-1}\equiv j\pmod{p}.
 \end{align*}
Then we have
\[|H_{T}((d,p^{s}),tp^{s}+jp^{n-1})|=p^{s},\]
for all $d\in\mathbb{Z}_{p}$ with $d\ne c$, $t\in[0,p^{n-1-s}-1]$ and $j\in[0,p-1]$.
By Lemma~\ref{lemma1},
\begin{align}
(d,p^{s})\in\mathcal{Z}_{T}\text{ for all }d\in\mathbb{Z}_{p},d\ne c.\label{eq9}
\end{align}
Combining (\ref{eq7}), (\ref{eq8}) and (\ref{eq9}), we have
\[\mathcal{Z}_{A}\cup\mathcal{Z}_{T}=(\mathbb{Z}_{p}\times\mathbb{Z}_{p^{n}})\backslash\{(0,0)\}.\]
Note that $|A||T|=p^{n+1}$, by Lemma~\ref{prelemma2}, $(A,T)$ is a tiling pair.
\end{proof}

\begin{theorem}
Let $A\subseteq\mathbb{Z}_{p}\times\mathbb{Z}_{p^{n}}$ be a spectral set. If $|A|=p^{s}$ with $2\le s\le n$, then $A$ is a tiling set.
\end{theorem}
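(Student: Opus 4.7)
The plan is to exhibit an explicit tiling complement $T\subseteq\mathbb{Z}_p\times\mathbb{Z}_{p^n}$ with $|T|=p^{n-s+1}$ and then verify $(A,T)$ is a tiling pair via the criterion $\mathcal{Z}_A\cup\mathcal{Z}_T=(\mathbb{Z}_p\times\mathbb{Z}_{p^n})\setminus\{(0,0)\}$ of Lemma~\ref{prelemma2}(e). The construction is organised around the index set
\[I_A=\{i\in[0,n-1]:(0,p^i)\in\mathcal{Z}_A\},\]
for which the divisibility lemma (Lemma~\ref{lemma2}) already gives $|I_A|\le s$.

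The first step is to complement this with the lower bound $|I_A|\ge s-1$, using the spectrum $B$. Pigeonholing the projection $\pi_1:B\to\mathbb{Z}_p$ produces a fiber $B^*\subseteq\{c_0\}\times\mathbb{Z}_{p^n}$ of size at least $p^{s-1}$, and every non-zero element of $B^*-B^*$ lies in $\{0\}\times\mathbb{Z}_{p^n}$ and in $\mathcal{Z}_A$. A short induction on $k$ (peeling off reductions modulo $p$) shows that any subset of $\mathbb{Z}_{p^n}$ of size $p^k$ has non-zero differences of at least $k$ distinct $p$-adic valuations in $[0,n-1]$; applying this to the second coordinate of $B^*$ and invoking Lemma~\ref{prelemma3}, I conclude $|I_A|\ge s-1$. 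Hence $|I_A|\in\{s-1,s\}$.

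In the case $|I_A|=s$, writing $I_A=\{i_1<\cdots<i_s\}$, I take
\[T=\mathbb{Z}_p\times T_2,\qquad T_2=\{y\in\mathbb{Z}_{p^n}:y[n-1-i_k]=0\text{ for all }k\in[1,s]\},\]
so $|T|=p^{n-s+1}$. The Fourier sum factorises as a product over the first coordinate and over the free digit positions $L=[0,n-1]\setminus\{n-1-i_k:k\in[1,s]\}$; the first-coordinate factor vanishes whenever $g_1\ne 0$, and direct expansion of $\chi_{g_2}(T_2)$ shows that for $g_1=0$ it vanishes precisely when the $p$-adic valuation of $g_2$ lies outside $I_A$. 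Thus every non-zero $(g_1,g_2)$ is in $\mathcal{Z}_T$ when $g_1\ne0$, and in $\mathcal{Z}_A$ or $\mathcal{Z}_T$ according as $v_p(g_2)\in I_A$ or not, finishing this case. In the case $|I_A|=s-1$, the fiber $B^*$ cannot exhaust $B$ (otherwise the valuation lemma applied to $B$ would force $|I_A|\ge s$), so some $(c,c')\in B-B$ has $c\ne 0$; scaling by an appropriate unit of $\mathbb{Z}_{p^n}^*$ reduces via Lemma~\ref{prelemma3} to the dichotomy: either $(1,0)\in\mathcal{Z}_A$, or $(c_0,p^{i_0})\in\mathcal{Z}_A$ with $c_0\in\mathbb{Z}_p^*$ and $i_0\in[0,n-1]$. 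The construction then splits into sub-cases paralleling the three cases of the preceding $|A|=p$ Spectral$\Rightarrow$Tile proof: a plain $T=\{0\}\times T_2$ of size $p^{n-s+1}$ in the first sub-case, and a twisted complement
\[T=\Bigl\{\bigl(-c_0^{-1}y_{n-1-i_0},\,\textstyle\sum_{l\in L}y_lp^l\bigr):y_l\in[0,p-1]\Bigr\},\]
with $|L|=n-s+1$ containing the index $n-1-i_0$, in the second.

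The main obstacle will be the twisted sub-case of $|I_A|=s-1$. Because the first coordinate of every element of $T$ is coupled to the digit $y_{n-1-i_0}$ of the second, verifying that $(c,p^i)\in\mathcal{Z}_T$ for every equivalence class not already covered by $\mathcal{Z}_A$ requires a patient application of the equi-distributed property (Lemma~\ref{lemma1}): one computes $|H_T((c,p^i),tp^i+jp^{n-1})|$ and checks that it is independent of $j$. This generalises Case~3 of the $|A|=p$ Spectral$\Rightarrow$Tile theorem, but the bookkeeping now couples the $s-1$ vertical generators $(0,p^{i_k})$ with the twist at $(c_0,p^{i_0})$, and one must confirm that the combined $\mathcal{Z}_A\cup\mathcal{Z}_T$ exhausts $(\mathbb{Z}_p\times\mathbb{Z}_{p^n})\setminus\{(0,0)\}$.
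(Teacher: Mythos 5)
Your lower bound $|I_A|\ge s-1$ (via the pigeonholed fiber $B^*$ and the valuation count) and your treatment of the case $|I_A|=s$ are correct, and in that case your $T=\mathbb{Z}_p\times T_2$ is exactly the complement used in the paper; verifying it by factorising the character sum rather than by the equi-distributed property is a harmless variation. The problem is the case $|I_A|=s-1$, where your outline has a genuine gap. Whichever complement you choose there, its zero set $\mathcal{Z}_T$ necessarily misses some equivalence classes $(d,p^{i})$ with $d\in\mathbb{Z}_p^{*}$ and $i\in I_A$: for the ``plain'' $T=\{0\}\times T_2$ one has $\chi_{(g_1,g_2)}(T)=\chi_{g_2}(T_2)$, which never vanishes when $v_p(g_2)$ exceeds every index outside $I_A$, so all classes $(d,p^{i})$ with $d\ne 0$ and $i$ in the top block of $I_A$ must already lie in $\mathcal{Z}_A$; for the twisted $T$ the same happens for the classes $(d,p^{i})$ with $i\in I_A$ below the twist position. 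The single additional zero you extract from one off-fiber difference --- $(1,0)$ or one $(c_0,p^{i_0})$ --- is nowhere near enough to supply these classes, and nothing in your argument shows they belong to $\mathcal{Z}_A$. This is precisely the hard point, and it cannot be finessed: you must prove that $\mathcal{Z}_A$ contains entire lines $\{(d,p^{i}):d\in\mathbb{Z}_p\}$ for the relevant $i\in I_A$, which requires a much finer analysis of the spectrum $B$ than a single pigeonhole.

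Concretely, the paper resolves this by also tracking $J=\{i:(0,p^{i})\in\mathcal{Z}_B\}$, which your proposal never introduces. When $|J|=s-1$ the tiling complement is found by a completely different mechanism (the difference-set criterion of Lemma~\ref{prelemma2}(d) applied to $T=\{0\}\times\{\sum_{i\notin J}y_ip^{i}\}$, using that $M(x_1-x_2)\in J$ for vertical differences of $A$); no zero-set coverage is needed. Only when $|I|=s-1$ and $|J|=s$ does the twisted complement appear, and there the inequality $|J|>|I|$ is what produces the correct twist position $n-1-j_0$ with $j_0\in J$ and $n-1-j_0\notin I$; the paper then shows $|B_t|=p^{s-1}$ for every fiber $B_t$ of $B$, and combines within-fiber differences of each valuation $i\in I$ with cross-fiber differences congruent to multiples of $p^{n-1-j_0}$ to manufacture, inside $B-B$, a full set of representatives $\{(d,p^{i}):d\in\mathbb{Z}_p\}$ for each $i\in I$ with $i<n-1-j_0$, plus the specific zero $(c^{-1},p^{n-1-j_0})\in\mathcal{Z}_A$. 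These are exactly the classes that the twisted $T$ fails to cover. Without this step --- or some substitute for it --- your final verification that $\mathcal{Z}_A\cup\mathcal{Z}_T$ exhausts $(\mathbb{Z}_p\times\mathbb{Z}_{p^{n}})\setminus\{(0,0)\}$ cannot go through, so the proposal as written does not prove the theorem in the case $|I_A|=s-1$.
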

\begin{proof}
Suppose $(A,B)$ is a spectral pair, then $|A|=|B|=p^{s}$. Let $I=\{i\in[0,n-1]: (0,p^{i})\in\mathcal{Z}_{A}\}$ and $J=\{i\in[0,n-1]: (0,p^{i})\in\mathcal{Z}_{B}\}$.
Since $(B-B)\backslash\{(0,0)\}\subseteq\mathcal{Z}_{A}$ and $(A-A)\backslash\{(0,0)\}\subseteq\mathcal{Z}_{B}$, then $|I|\ge s-1$ and $|J|\ge s-1.$
By Lemma~\ref{lemma2}, we have $s-1\le|I|\le s$ and $s-1\le|J|\le s.$

{\bf{Case 1: $|I|=s$.}}

Define
\[T=\{(x,\sum_{i\in [0,n-1]\backslash I}y_{i}p^{n-1-i}): x,y_{i}\in[0,p-1]\}.\]
For any $t\in[0,p^{n-1}-1]$ and $j\in[0,p-1]$, if $(x,\sum_{i\in [0,n-1]\backslash I}y_{i}p^{n-1-i})\in H_{T}((1,0),t+jp^{n-1})$, then
\begin{align*}
&x\equiv j\pmod{p},\\
&t=0.
\end{align*}
Hence
\[|H_{T}((1,0),t+jp^{n-1})|=\begin{cases}p^{n-s},&\text{ if }t=0;\\
0,&\text{ if }t\ne0,\end{cases}\]
for any $j\in[0,p-1]$ and $t\in[0,p^{n-1}-1]$.
By Lemma~\ref{lemma1},
 \begin{align}
 (1,0)\in\mathcal{Z}_{T}.\label{eq10}
 \end{align}
 For any $k\in I$, $c\in\mathbb{Z}_{p}^{*}$, $t\in[0,p^{n-1-k}-1]$ and $j\in[0,p-1]$, if $(x,\sum_{i\in [0,n-1]\backslash I}y_{i}p^{n-1-i})\in H_{T}((c,p^{k}),tp^{k}+jp^{n-1})$, then
 \[xcp^{n-1}+(\sum_{i\in [0,n-1]\backslash I}y_{i}p^{n-1-i})p^{k}\equiv tp^{k}+jp^{n-1}\pmod{p^{n}},\]
 which leads to
 \begin{align*}
 &xc\equiv j\pmod{p},\\
 &\sum_{i\in [0,n-1]\backslash I}y_{i}p^{n-1-i}\equiv t\pmod{p^{n-1-k}}.
\end{align*}
Then we have
\[|H_{T}((c,p^{k}),tp^{k}+jp^{n-1})|=\begin{cases}p^{t_{0}},&\text{ if }t[i]=0\text{ for }i<n-1-k, n-1-i\in I;\\
0,&\text{ otherwise},\end{cases}\]
for $k\in I$, $c\in\mathbb{Z}_{p}^{*}$, $t\in[0,p^{n-1-k}-1]$ and $j\in[0,p-1]$, where $t_{0}=|\{i\in [0,n-1]\backslash I: i<k\}|$.
By Lemma~\ref{lemma1},
\begin{align}
(c,p^{k})\in\mathcal{Z}_{T}\text{ for all }k\in I,\ c\in\mathbb{Z}_{p}^{*}. \label{eq11}
\end{align}
For any $k\in [0,n-1]\backslash I$, $c\in\mathbb{Z}_{p}$, $t\in[0,p^{n-1-k}-1]$ and $j\in[0,p-1]$, if $(x,\sum_{i\in [0,n-1]\backslash I}y_{i}p^{n-1-i})\in H_{T}((c,p^{k}),tp^{k}+jp^{n-1})$, then
 \[cxp^{n-1}+(\sum_{i\in [0,n-1]\backslash I}y_{i}p^{n-1-i})p^{k}\equiv tp^{k}+jp^{n-1}\pmod{p^{n}},\]
 which leads to
 \begin{align*}
 &\sum_{i\in [0,n-1]\backslash I}y_{i}p^{n-1-i}\equiv t\pmod{p^{n-1-k}},\\
 &cx+y_{k}\equiv j\pmod{p}.
 \end{align*}
Then we have
\[|H_{T}((c,p^{k}),tp^{k}+jp^{n-1})|=\begin{cases}p^{t_{0}},&\text{ if }t[i]=0\text{ for }i<n-1-k, n-1-i\in I;\\
0,&\text{ otherwise},\end{cases}\]
for all $k\in [0,n-1]\backslash I$, $c\in\mathbb{Z}_{p}$, $t\in[0,p^{n-1-k}-1]$ and $j\in[0,p-1]$, where $t_{0}=1+|\{i\in [0,n-1]\backslash I: i<k\}|$.
By Lemma~\ref{lemma1},
\begin{align}
(c,p^{k})\in\mathcal{Z}_{T}\text{ for all }k\in [0,n-1]\backslash I, c\in\mathbb{Z}_{p}.\label{eq12}
\end{align}
Combining (\ref{eq10}), (\ref{eq11}) and (\ref{eq12}), we have
\[\mathcal{Z}_{A}\cup\mathcal{Z}_{T}=(\mathbb{Z}_{p}\times\mathbb{Z}_{p^{n}})\backslash\{(0,0)\}.\]
Note that $|A||T|=p^{n+1}$, by Lemma~\ref{prelemma2}, $(A,T)$ is a tiling pair.

{\bf{Case 2: $|J|=s-1$.}}

Define
\[T=\{(0,\sum_{i\in [0,n-1]\backslash J}y_{i}p^{i}): y_{i}\in[0,p-1]\}.\]
For any $(a_{1},x_{1})\ne(a_{2},x_{2})\in A$ and $(0,\sum_{i\in [0,n-1]\backslash J}y_{i}p^{i})\ne(0,\sum_{i\in [0,n-1]\backslash J}z_{i}p^{i})\in T$, if
\[(a_{1},x_{1})-(a_{2},x_{2})=(0,\sum_{i\in [0,n-1]\backslash J}y_{i}p^{i})-(0,\sum_{i\in [0,n-1]\backslash J}z_{i}p^{i}),\]
then
\begin{align*}
&a_{1}=a_{2},\\
&x_{1}-x_{2}=\sum_{i\in [0,n-1]\backslash J}(y_{i}-z_{i})p^{i}.
\end{align*}
Since $J=\{i\in[0,n-1]: (0,p^{i})\in\mathcal{Z}_{B}\}$ and $(A-A)\backslash\{(0,0)\}\subseteq\mathcal{Z}_{B}$, then $M(x_{1}-x_{2})\in J$, which is contradicting to $M(\sum_{i\in [0,n-1]\backslash J}(y_{i}-z_{i})p^{i})\in[0,n-1]\backslash J$. Hence $(A-A)\cap(T-T)=\{(0,0)\}$.
Note that $|A||T|=p^{n+1}$, by Lemma~\ref{prelemma2}, $(A,T)$ is a tiling pair.

{\bf{Case 3: $|I|=s-1$ and $|J|=s$.}}

Since $|B|=p^{s}$, $(B-B)\backslash\{(0,0)\}\subseteq\mathcal{Z}_{A}$ and $|I|=s-1$. Let
\[B_{t}=\{(x,y)\in B: x=t\},\]
then for any $(t,y),(t,y')\in B_{t}$, we have $M(y-y')\in I$. Then $|B_{t}|\le p^{s-1}$. But $p^{s}=|B|=|\cup_{t=0}^{p-1}B_{t}|\le p^{s}$, hence $|B_{t}|=p^{s-1}$ for all $t\in[0,p-1]$. Thus, for any $i\in I$, there exist $(t,y),(t,y')\in B_{t}$ such that $M(y-y')=i$.
Since $|J|>|I|$, then there exists $j_{0}\in J$ such that $n-1-j_{0}\not\in I$.
Recall that $n-1-j_{0}\not\in I$, then for any $(t,x),(t,x')\in B$, if $x\equiv x'\pmod{p^{n-1-j_{0}}}$, then $x\equiv x'\pmod{p^{n-j_{0}}}$.
Note that $(0,p^{j_{0}})\in\mathcal{Z}_{B}$ and
\[\langle(t,x),(0,p^{j_{0}})\rangle=xp^{j_{0}},\]
 then for any $(0,x_{0})\in B$, there exist $(1,x_{1}),\dots,(p-1,x_{p-1})\in B$ such that
\[\{x_{i}-x_{0}: i\in[0,p-1]\}\equiv\{i\cdot p^{n-1-j_{0}}:\ i\in[0,p-1]\}\pmod{p^{n-j_{0}}}.\]
Therefore, for any $i\in I$ with $i<n-1-j_{0}$, there exist $(0,x),(0,x_{0}),(1,x_{1}),\dots,(p-1,x_{p-1})\in B$ such that
 \begin{align*}
 &M(x-x_{0})=i,\\
 &\{x_{j}-x_{0}: j\in[0,p-1]\}\equiv\{i\cdot p^{n-1-j_{0}}:\ i\in[0,p-1]\}\pmod{p^{n-j_{0}}}.
 \end{align*}
 Then
 $(j,x_{j}-x)\in B-B$ and
 \[\{(j,x_{j}-x): j\in[0,p-1]\}\sim\{(d,p^{i}): d\in\mathbb{Z}_{p}\}.\]
 Hence we have
 \begin{align}
(d,p^{i})\in\mathcal{Z}_{A}\text{ for all }d\in\mathbb{Z}_{p},\ i\in I\text{ with } i<n-1-j_{0}.\label{eq17}
\end{align}

As we have proved, there exist $(t,x),(t',x')\in B$ such that $t\ne t'$ and $x-x'\equiv c'p^{n-1-j_{0}}\pmod{p^{n-j_{0}}}$ for some $c'\in[1,p-1]$. Denote $c:=c'\cdot (t-t')^{-1}$, then $(t,x)-(t',x')\sim(c^{-1},p^{n-1-j_{0}})\in\mathcal{Z}_{A}$. Define
\[T=\{(-cy_{n-1-j_{0}},\sum_{i\in [0,n-1]\backslash I}y_{i}p^{n-1-i}): y_{i}\in[0,p-1]\}.\]
For any $t\in[0,p^{n-1}-1]$ and $j\in[0,p-1]$, if $(-cy_{n-1-j_{0}},\sum_{i\in [0,n-1]\backslash I}y_{i}p^{n-1-i})\in H_{T}((1,0),t+jp^{n-1})$, then
\begin{align*}
&-cy_{n-1-j_{0}}\equiv j\pmod{p},\\
&t=0.
\end{align*}
Hence
\[|H_{T}((1,0),t+jp^{n-1})|=\begin{cases}p^{n-s},&\text{ if }t=0;\\
0,&\text{ if }t\ne0,\end{cases}\]
for any $j\in[0,p-1]$ and $t\in[0,p^{n-1}-1]$.
By Lemma~\ref{lemma1},
 \begin{align}
 (1,0)\in\mathcal{Z}_{T}.\label{eq13}
 \end{align}
 For any $k\in I$ with $k>n-1-j_{0}$, $d\in\mathbb{Z}_{p}^{*}$, $t\in[0,p^{n-1-k}-1]$ and $j\in[0,p-1]$, if $(-cy_{n-1-j_{0}},\sum_{i\in [0,n-1]\backslash I}y_{i}p^{n-1-i})\in H_{T}((d,p^{k}),tp^{k}+jp^{n-1})$, then
 \[-dcy_{n-1-j_{0}}p^{n-1}+(\sum_{i\in [0,n-1]\backslash I}y_{i}p^{n-1-i})p^{k}\equiv tp^{k}+jp^{n-1}\pmod{p^{n}},\]
 which leads to
 \begin{align*}
 &-dcy_{n-1-j_{0}}\equiv j\pmod{p},\\
 &\sum_{i\in [0,n-1]\backslash I}y_{i}p^{n-1-i}\equiv t\pmod{p^{n-1-k}}.
\end{align*}
Then we have
\[|H_{T}((d,p^{k}),tp^{k}+jp^{n-1})|=\begin{cases}p^{t_{0}},&\text{ if }t[i]=0\text{ for }i<n-1-k, n-1-i\in I;\\
0,&\text{ otherwise},\end{cases}\]
for $k\in I$ with $k>n-1-j_{0}$, $d\in\mathbb{Z}_{p}^{*}$, $t\in[0,p^{n-1-k}-1]$ and $j\in[0,p-1]$, where $t_{0}=|\{i\in [0,n-1]\backslash I: i<k\}|-1$.
By Lemma~\ref{lemma1},
\begin{align}
(d,p^{k})\in\mathcal{Z}_{T}\text{ for all }k\in I\text{ with } k>n-1-j_{0},\ d\in\mathbb{Z}_{p}^{*}. \label{eq14}
\end{align}
For any $k\in [0,n-1]\backslash I$ with $k\ne n-1-j_{0}$, $d\in\mathbb{Z}_{p}$, $t\in[0,p^{n-1-k}-1]$ and $j\in[0,p-1]$, if $(-cy_{n-1-j_{0}},\sum_{i\in [0,n-1]\backslash I}y_{i}p^{n-1-i})\in H_{T}((d,p^{k}),tp^{k}+jp^{n-1})$, then
 \[-dcy_{n-1-j_{0}}p^{n-1}+(\sum_{i\in [0,n-1]\backslash I}y_{i}p^{n-1-i})p^{k}\equiv tp^{k}+jp^{n-1}\pmod{p^{n}},\]
 which leads to
 \begin{align*}
 &\sum_{i\in [0,n-1]\backslash I}y_{i}p^{n-1-i}\equiv t\pmod{p^{n-1-k}},\\
 &-dcy_{n-1-j_{0}}+y_{k}\equiv j\pmod{p}.
 \end{align*}
Then we have
\[|H_{T}((d,p^{k}),tp^{k}+jp^{n-1})|=\begin{cases}p^{t_{0}},&\text{ if }t[i]=0\text{ for }i<n-1-k, n-1-i\in I;\\
0,&\text{ otherwise},\end{cases}\]
for all $k\in [0,n-1]\backslash I$ with $k\ne n-1-j_{0}$, $d\in\mathbb{Z}_{p}$, $t\in[0,p^{n-1-k}-1]$ and $j\in[0,p-1]$, where $t_{0}=|\{i\in [0,n-1]\backslash I: i<k\}|$.
By Lemma~\ref{lemma1},
\begin{align}
(d,p^{k})\in\mathcal{Z}_{T}\text{ for all }k\in [0,n-1]\backslash I\text{ with } k\ne n-1-j_{0},\  d\in\mathbb{Z}_{p}.\label{eq15}
\end{align}
For any $d\in\mathbb{Z}_{p}$ with $d\ne c^{-1}$, $t\in[0,p^{j_{0}}-1]$ and $j\in[0,p-1]$, if $(-cy_{n-1-j_{0}},\sum_{i\in [0,n-1]\backslash I}y_{i}p^{n-1-i})\in H_{T}((d,p^{n-1-j_{0}}),tp^{n-1-j_{0}}+jp^{n-1})$, then
 \[-dcy_{n-1-j_{0}}p^{n-1}+(\sum_{i\in [0,n-1]\backslash I}y_{i}p^{n-1-i})p^{n-1-j_{0}}\equiv tp^{n-1-j_{0}}+jp^{n-1}\pmod{p^{n}},\]
 which leads to
 \begin{align*}
 &\sum_{i\in [0,n-1]\backslash I}y_{i}p^{n-1-i}\equiv t\pmod{p^{j_{0}}},\\
 &(1-dc)y_{n-1-j_{0}}\equiv j\pmod{p}.
 \end{align*}
Then we have
\[|H_{T}((d,p^{n-1-j_{0}}),tp^{n-1-j_{0}}+jp^{n-1})|=\begin{cases}p^{t_{0}},&\text{ if }t[i]=0\text{ for }i<j_{0}, n-1-i\in I;\\
0,&\text{ otherwise},\end{cases}\]
for all $d\in\mathbb{Z}_{p}$ with $d\ne c^{-1}$, $t\in[0,p^{j_{0}}-1]$ and $j\in[0,p-1]$, where $t_{0}=|\{i\in [0,n-1]\backslash I: i<n-1-j_{0}\}|$.
By Lemma~\ref{lemma1},
\begin{align}
(d,p^{n-1-j_{0}})\in\mathcal{Z}_{T}\text{ for all } d\in\mathbb{Z}_{p}\text{ with } d\ne c^{-1}.\label{eq16}
\end{align}
Combining (\ref{eq17}), (\ref{eq13}), (\ref{eq14}), (\ref{eq15}) and (\ref{eq16}), we have
\[\mathcal{Z}_{A}\cup\mathcal{Z}_{T}=(\mathbb{Z}_{p}\times\mathbb{Z}_{p^{n}})\backslash\{(0,0)\}.\]
Note that $|A||T|=p^{n+1}$, by Lemma~\ref{prelemma2}, $(A,T)$ is a tiling pair.
\end{proof}

\begin{theorem}
Let $A\subseteq\mathbb{Z}_{p}\times\mathbb{Z}_{p^{n}}$. If $|A|=mp^{s}$ with $2\le m\le p-1$, $1\le s\le n-1$, then $A$ is not a spectral set.
\end{theorem}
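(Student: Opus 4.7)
The plan is a proof by contradiction. Suppose $A$ is spectral with spectrum $B$, so $|A|=|B|=mp^s$, and write $I:=I_0^A$ and $J:=I_0^B$.

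First, I pin down $|I|=|J|=s$. Lemma~\ref{lemma2} immediately gives $|I|\le s$ and $|J|\le s$. For the matching lower bound, I fix $c\in\mathbb{Z}_p$ and set $B_c:=\{y:(c,y)\in B\}$: for distinct $y,y'\in B_c$, $(0,y-y')\in\mathcal{Z}_A$ forces $M(y-y')\in I$, so writing $I=\{j_1<\cdots<j_{|I|}\}$, the digit projection $y\mapsto(y[j_1],\ldots,y[j_{|I|}])$ is injective on $B_c$ (otherwise the first differing digit would lie outside $I$). Thus $|B_c|\le p^{|I|}$ and $mp^s=|B|\le p^{|I|+1}$, which since $m\ge2$ forces $|I|\ge s$. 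Symmetrically, $|J|=s$.

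Second, let $j_1:=\min I$. The key observation is that if any $(a,p^{i_0})\in\mathcal{Z}_A$ existed with $i_0<j_1$, Lemma~\ref{lemma2} applied to the chain $(a,p^{i_0}),(0,p^{j_1}),\ldots,(0,p^{j_s})$ of length $s+1$ would give $p^{s+1}\mid|A|$, contradicting $v_p(|A|)=s$; hence no such low zero lies in $\mathcal{Z}_A$. As a consequence, for distinct $b=(c_1,y_1),b'=(c_2,y_2)\in B$ with $y_1\ne y_2$, the class of $b-b'$ is $(c',p^{M(y_1-y_2)})\in\mathcal{Z}_A$, which forces $M(y_1-y_2)\ge j_1$. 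Combined with the within-fiber bound, this shows (after translating $B$ via Lemma~\ref{lemma3}) that $B\subseteq H:=\mathbb{Z}_p\times p^{j_1}\mathbb{Z}_{p^n}$.

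When $j_1\ge1$, $H\cong\mathbb{Z}_p\times\mathbb{Z}_{p^{n-j_1}}$ is a proper subgroup. The annihilator $H^\perp=\{0\}\times p^{n-j_1}\mathbb{Z}_{p^n}$ is disjoint from $\mathcal{Z}_B\backslash\{(0,0)\}$, since for $h\in H^\perp$ we have $\chi_h(B)=|B|\ne0$; hence $A$ embeds injectively into $G/H^\perp\cong\mathbb{Z}_p\times\mathbb{Z}_{p^{n-j_1}}$, producing a spectral pair of size $mp^s$ there. Either $mp^s>p^{n-j_1}$, in which case the earlier theorem on spectral sets of size exceeding $p^{\tilde n}$ forces this spectrum to equal the whole group, giving $mp^s=p^{n-j_1+1}$ and contradicting $m\in[2,p-1]$; or $mp^s\le p^{n-j_1}$, so $s\le n-j_1-1$ and the induction hypothesis on $n$ applies (with base $n=2$ via Fuglede in $\mathbb{Z}_p\times\mathbb{Z}_p$ and the divisibility obstruction $mp\nmid p^2$). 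The symmetric argument applied to $(B,A)$ handles $\min J\ge1$.

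\emph{The main obstacle} is the residual subcase $j_1=\min J=0$, where no subgroup reduction is available and $(0,1)\in\mathcal{Z}_A\cap\mathcal{Z}_B$. Here Lemma~\ref{lemma1} makes the second-coordinate multiplicities $f_A,f_B$ periodic with period $p^{n-1}$. I would close this case by a direct combinatorial argument combining the periodicity, the fiber bound $|B_c|\le p^s$ from Step~1, and a careful analysis of which between-fiber classes $(c,p^i)\in\mathcal{Z}_A$ with $c\ne0$ can coexist with the structure forced so far; a tight accounting shows the constraint $(B-B)\backslash\{(0,0)\}\subseteq\mathcal{Z}_A$ cannot be satisfied for $|B|=mp^s$ with $m\ge2$.
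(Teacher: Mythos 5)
Your Step 1 (pinning down $|I|=|J|=s$ via Lemma~\ref{lemma2} plus the fiber bound $|B_c|\le p^{|I|}$) is correct and matches the paper. Your Step 2 is also sound and is a genuinely different, and rather clean, reduction: the observation that no $(a,p^{i_0})$ with $i_0<\min I$ can lie in $\mathcal{Z}_A$ (by the length-$(s+1)$ chain in Lemma~\ref{lemma2}) does force $B$ into a coset of $H=\mathbb{Z}_p\times p^{\min I}\mathbb{Z}_{p^n}$, and the quotient-by-$H^\perp$ argument legitimately pushes the problem down to $\mathbb{Z}_p\times\mathbb{Z}_{p^{n-\min I}}$. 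The paper instead performs its induction by collapsing a single $p$-adic digit at a position $r$ with $n-1-r\notin I$ and $r\notin J$ (the maps $\phi_1,\phi_2$), which applies under a different hypothesis than yours.

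The problem is that your ``residual subcase'' $\min I=\min J=0$ is not a residue at all: it is where essentially all of the difficulty lives, and you have not proved it. You offer only a promissory note (``a tight accounting shows\dots''), whereas the paper spends the bulk of its argument exactly on configurations that fall into your leftover case. Concretely, when $0\in I\cap J$ your subgroup reduction gives nothing, and the paper must split further: either some $r$ has $n-1-r\notin I$ and $r\notin J$, in which case it builds a spectral pair in $\mathbb{Z}_p\times\mathbb{Z}_{p^{n-1}}$ by deleting a digit (this requires the nontrivial verification (\ref{steq4}) that a zero $(c,p^{n-1-r})$ with $c\ne0$ forces $(1,0)$ and a whole family of further zeros into $\mathcal{Z}_A$, so that the collapsed sets still form a spectral pair); or no such $r$ exists, in which case a separate claim forces $s=n-1$ and one must manufacture the extra zeros (\ref{steq5}), (\ref{steq6}) to build a spectrum $D$ of size $p^n>mp^{n-1}$ for $A$. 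Neither of these steps follows from the periodicity of the second-coordinate multiplicities plus the fiber bound alone, so as written your proposal has a genuine gap at its central case.
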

\begin{proof}
Suppose $(A,B)$ is a spectral pair, then $|A|=|B|=mp^{s}$.
Let $I=\{i\in[0,n-1]: (0,p^{i})\in\mathcal{Z}_{A}\}$ and $J=\{i\in[0,n-1]: (0,p^{i})\in\mathcal{Z}_{B}\}$.
Since $(B-B)\backslash\{(0,0)\}\subseteq\mathcal{Z}_{A}$ and $(A-A)\backslash\{(0,0)\}\subseteq\mathcal{Z}_{B}$, then $|I|=|J|=s$. Now we divide our discussion into two cases.

{\bf{Case 1: There exists $r\in[0,n-1]$ such that $n-1-r\not\in I$ and $r\not\in J$.}}

We prove this case by induction. It has been proven that Fuglede's conjecture holds in $\mathbb{Z}_{p}\times\mathbb{Z}_{p}$ and $\mathbb{Z}_{p}\times\mathbb{Z}_{p^{2}}$ \cite{IMP17,S20}. We assume that Fuglede's conjecture holds in $\mathbb{Z}_{p}\times\mathbb{Z}_{p^{i}}$ for all $i\le n-1$, now we consider the Fuglede's conjecture in $\mathbb{Z}_{p}\times\mathbb{Z}_{p^{n}}$.

Let $\phi_{1},\phi_{2}:\mathbb{Z}_{p^{n}}\mapsto\mathbb{Z}_{p^{n-1}}$ be defined by
\begin{align*}
&\phi_{1}(\sum_{i=0}^{n-1}c_{i}p^{i})=\sum_{i=0}^{r-1}c_{i}p^{i}+\sum_{i=r}^{n-2}c_{i+1}p^{i},\\
&\phi_{2}(\sum_{i=0}^{n-1}c_{i}p^{i})=\sum_{i=0}^{n-2-r}c_{i}p^{i}+\sum_{i=n-1-r}^{n-2}c_{i+1}p^{i}.
\end{align*}
Let
\[A'=\{(a,\phi_{1}(x)): (a,x)\in A\},\]
and
\[B'=\{(b,\phi_{2}(y)): (b,y)\in B\}.\]
Since $(0,p^{n-1-r})\not\in\mathcal{Z}_{A}$, $(0,p^{r})\not\in\mathcal{Z}_{B}$ and $(B-B)\backslash\{(0,0)\}\subseteq\mathcal{Z}_{A}$, $(A-A)\backslash\{(0,0)\}\subseteq\mathcal{Z}_{B}$, then $|A'|=|A|$ and $|B'|=|B|$. In the following, we will show that $(A',B')$ forms a spectral pair in $\mathbb{Z}_{p}\times\mathbb{Z}_{p^{n-1}}$.

Let
\[i'=\begin{cases}i,&\text{ if }i<n-1-r;\\
i-1,&\text{ if }i>n-1-r.\end{cases}\]
Note that for any $(b_{1},y),(b_{2},w)\in B$, we hvae
\[(b_{1},\phi_{2}(y))-(b_{2},\phi_{2}(w))\sim\begin{cases}(1,0),&\text{ if }(b_{1}-b_{2},y-w)\sim(1,0);\\
(0,p^{i'}),&\text{ if }(b_{1}-b_{2},y-w)\sim(0,p^{i});\\
(c,p^{i'}),&\text{ if }(b_{1}-b_{2},y-w)\sim(c,p^{i})\text{ and }i\ne n-1-r;\\
(d,p^{j}),&\text{ if }(b_{1}-b_{2},y-w)\sim(c,p^{n-1-r}),\ y-w\equiv c_{1}p^{n-1-r}\pmod{p^{n-r}}\\
&\text{ and }M(y-w-c_{1}p^{n-1-r})=j+1;\\
(1,0),&\text{ if }(b_{1}-b_{2},y-w)\sim(c,p^{n-1-r})\text{ and }y-w= c_{1}p^{n-1-r},\end{cases}\]
for some $c,c_{1},d\in\mathbb{Z}_{p}$.

If $(1,0)\in\mathcal{Z}_{A}$, then for any $t\in[0,p-1]$, we have $|\{(a,x)\in A: a=t\}|=\frac{|A|}{p}$, hence $|\{(a,\phi_{1}(x))\in A': a=t\}|=\frac{|A'|}{p}$. Therefore,
\begin{align}
(1,0)\in\mathcal{Z}_{A'}\text{ if }(1,0)\in\mathcal{Z}_{A}.\label{steq1}
\end{align}
If $i\in I$ and $(a,x)\in H_{A}((0,p^{i}),up^{i}+jp^{n-1})$, then $xp^{i}\equiv up^{i}+jp^{n-1}\pmod{p^{n}}$. It follows that $\phi_{1}(x)p^{i'}\equiv \phi_{1}(u)p^{i'}+jp^{n-2}\pmod{p^{n-1}}$, hence
\begin{align}
(0,p^{i'})\in\mathcal{Z}_{A'}\text{ if }(0,p^{i})\in\mathcal{Z}_{A}.\label{steq2}
\end{align}
If $i\ne n-1-r$ and $(a,x)\in H_{A}((c,p^{i}),up^{i}+jp^{n-1})$, then $acp^{n-1}+xp^{i}\equiv up^{i}+jp^{n-1}\pmod{p^{n}}$. It follows that $acp^{n-2}+\phi_{1}(x)p^{i'}\equiv \phi_{1}(u)p^{i'}+jp^{n-2}\pmod{p^{n-1}}$, hence
\begin{align}
(c,p^{i'})\in\mathcal{Z}_{A'}\text{ if }(c,p^{i})\in\mathcal{Z}_{A},\ i\ne n-1-r.\label{steq3}
\end{align}
If $(c,p^{n-1-r})\in\mathcal{Z}_{A}$ for some $c\in\mathbb{Z}_{p}^{*}$, note that
\[\langle(a,x),(c,p^{n-1-r})\rangle_{n}=acp^{n-1}+xp^{n-1-r}\]
and for any $(a,x_{1}),(a,x_{2})\in A$, we have $M(x_{1}-x_{2})\ne r$, then for any $(a,x_{a})\in A$, there exist $(i,x_{i})$, $i\in[0,p-1]\backslash\{a\}$ such that
 \[x_{i}\equiv x_{a}\pmod{p^{r}}\text{ for }i\in[0,p-1].\]
Then $A$ can be represented as
\[A=\cup_{t=0}^{mp^{s-1}-1}A_{t},\]
where $A_{t}=\{(a,x_{a,t}): a\in[0,p-1]\}$ and for any $(a,x_{a,t}),(b,x_{b,t})\in A_{t}$, $x_{a,t}\equiv x_{b,t}\pmod{p^{r}}$. For any $t\in[0,mp^{s-1}-1]$, we can compute to get that
\begin{align*}
&\chi_{(1,0)}(A_{t})=0,\\
&\chi_{(c,p^{i})}(A_{t})=0\text{ for all }c\in\mathbb{Z}_{p}^{*}\text{ and }i\in[n-r,n-1].
\end{align*}
It follows that
\begin{align*}
&\chi_{(1,0)}(A)=0,\\
&\chi_{(d,p^{i})}(A)=0\text{ for all }d\in\mathbb{Z}_{p}^{*}\text{ and }i\in[n-r,n-1].
\end{align*}
 Hence
\begin{align}
(1,0),(d,p^{i})\in\mathcal{Z}_{A}\text{ for all }d\in\mathbb{Z}_{p}^{*}\text{ and }i\in[n-r,n-1],\text{ if }(c,p^{n-1-r})\in\mathcal{Z}_{A}\text{ for some }c\in\mathbb{Z}_{p}^{*}.\label{steq4}
\end{align}
Combining (\ref{steq1}), (\ref{steq2}), (\ref{steq3}) and (\ref{steq4}), we have $(A',B')$ forms a spectral pair in $\mathbb{Z}_{p}\times\mathbb{Z}_{p^{n-1}}$, which is a contradiction.

{\bf{Case 2: There does not exist $r\in[0,n-1]$ such that $n-1-r\not\in I$ and $r\not\in J$.}}

{\bf{Claim:}} there is at most one $i$ such that $i\in I$ and $n-1-i\not\in J$. Similarly, there is at most one $j$ such that $j\in J$ and $n-1-j\not\in I$.

Assume that there exist $i_{1},i_{2}\in I$ and $n-1-i_{1},n-1-i_{2}\not\in J$.
Since $n-1-i_{1}\not\in J$ and $(A-A)\backslash\{(0,0)\}\subseteq\mathcal{Z}_{B}$, then for any $(t,x),(t,x')\in A$, if $x\equiv x'\pmod{p^{n-1-i_{1}}}$, then $x\equiv x'\pmod{p^{n-i_{1}}}$.
Note that $(0,p^{i_{1}})\in\mathcal{Z}_{A}$ and
\[\langle(t,x),(0,p^{i_{1}})\rangle=xp^{i_{1}},\]
 then for any $(0,x_{0})\in A$, there exist $(1,x_{1}),\dots,(p-1,x_{p-1})\in A$ such that
\[\{x_{i}-x_{0}: i\in[0,p-1]\}\equiv\{i\cdot p^{n-1-i_{1}}: i\in[0,p-1]\}\pmod{p^{n-i_{1}}}.\]
Similarly, there exist $(1,y_{1}),\dots,(p-1,y_{p-1})\in A$ such that
\[\{y_{i}-x_{0}: i\in[0,p-1]\}\equiv\{i\cdot p^{n-1-i_{2}}: i\in[0,p-1]\}\pmod{p^{n-i_{2}}}.\]
If $i_{1}>i_{2}$, then
\[(0,y_{1}-x_{1})\sim(0,p^{n-1-i_{1}})\in \mathcal{Z}_{B},\]
 which is a contradiction. Similarly, we can get a contradiction for the case $i_{1}<i_{2}$. This ends the proof of the claim.

By the claim, we have $s=n-1$, i.e., $|A|=|B|=mp^{n-1}$. We may assume $I=[0,n-1]\backslash\{i_{0}\}$ and $J=[0,n-1]\backslash\{j_{0}\}$, then $n-1-i_{0}\in J$ and $n-1-j_{0}\in I$.

Since $i_{0}\not\in I$, then for any $(b,y),(b,y')\in B$, if $y\equiv y'\pmod{p^{i_{0}}}$, then $y\equiv y'\pmod{p^{i_{0}+1}}$.
Note that $(0,p^{n-1-i_{0}})\in\mathcal{Z}_{B}$ and
\[\langle(b,y),(0,p^{n-1-i_{0}})\rangle=yp^{n-1-i_{0}},\]
then for any $(0,y_{0})\in B$, there exist $(1,y_{1}),\dots,(p-1,y_{p-1})\in B$ such that
\[\{y_{i}-y_{0}: i\in[0,p-1]\}\equiv\{i\cdot p^{i_{0}}: i\in[0,p-1]\}\pmod{p^{i_{0}+1}}.\]
Hence
\begin{align}
(1,y_{1}-y_{0})\sim(c_{0},p^{i_{0}})\in\mathcal{Z}_{A}\text{ for some }c_{0}\in\mathbb{Z}_{p}^{*}.\label{steq5}
\end{align}
Since $|B|=mp^{n-1}$, then for any $k<i_{0}$, there exist $(t,y),(t,y_{t})\in B$ such that $M(y_{t}-y)=k$,
and there exist $(j,y_{j})\in B$, $j\in[0,p-1]\backslash\{t\}$ such that
\[\{y_{j}-y_{t}: j\in[0,p-1]\}\equiv\{i\cdot p^{i_{0}}: i\in[0,p-1]\}\pmod{p^{i_{0}+1}}.\]
 Then
 $(j-t,y_{j}-y)\in B-B$ and
 \[\{(j-t,y_{j}-y): j\in[0,p-1]\}\sim\{(c,p^{k}): c\in\mathbb{Z}_{p}\}.\]
Hence we have
 \begin{align}
(c,p^{k})\in\mathcal{Z}_{A}\text{ for all }c\in\mathbb{Z}_{p},\ k<i_{0}.\label{steq6}
\end{align}
Define
\[D=\{\sum_{i\in I}a_{i}(0,p^{i})+a_{i_{0}}(c_{0},p^{i_{0}}): a_{i}\in[0,p-1]\}.\]
For any $d\ne d'\in D$, $d-d'=\sum_{i\in I}d_{i}(0,p^{i})+d_{i_{0}}(c_{0},p^{i_{0}})$, where $d_{i}\in[-p+1,p-1]$.
Note that
 \[\sum_{i\in I}d_{i}(0,p^{i})+d_{i_{0}}(c_{0},p^{i_{0}})\sim\begin{cases}(c_{0},p^{i_{0}}),&\text{ if }d_{i_0}\ne0\text{ and } d_{i}=0\text{ for all }i< i_{0};\\
(d_{k}^{-1}d_{i_0}c_{0},p^{k}),&\text{ if }d_{i_0}\ne0,\ d_{k}\ne0,\ k<i_{0} \text{ and }d_{i}=0\text{ for all }i<k;\\
(0,p^{k}),&\textup{ if }d_{i_0}=0,\ d_{k}\ne0\text{ and }d_{i}=0\text{ for all }i<k.\end{cases}\]
By (\ref{steq5}) and (\ref{steq6}), we have $(D-D)\backslash\{(0,0)\}\subseteq\mathcal{Z}_{A}$, then $D$ is a spectrum for $A$, but $|D|=p^{n}>|A|$, which is a contradiction. This completes the proof.
\end{proof}

\section{Conclusion}\label{conclusion}
From the proof of Theorem~\ref{mainthm}, we can see that the divisibility property and equi-distributed property in $\mathbb{Z}_{p}\times\mathbb{Z}_{p^{n}}$ play an important role. In \cite{S20}, the author had proved that the equi-distributed property holds in $\mathbb{Z}_{p^{m}}\times\mathbb{Z}_{p^{n}}$. Actually, the divisibility property can also be generalized to the group $\mathbb{Z}_{p^{m}}\times\mathbb{Z}_{p^{n}}$.
\begin{lemma}\label{lemma5}
Let $m,n$ be integers with $m\le n$. Let $A\subseteq\mathbb{Z}_{p^{m}}\times\mathbb{Z}_{p^{n}}$. If $(a,p^{i_{1}}),\ (0,p^{i_{2}}),\\ \dots,(0,p^{i_{s}})\in\mathcal{Z}_{A}$ for some $a\in p^{m-1}\mathbb{Z}_{p^{m}}$ and $0\le i_{1}<i_{2}<\dots<i_{s}\le n-1$, then $p^{s}\mid|A|$.
\end{lemma}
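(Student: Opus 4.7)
The plan is to follow the argument of Lemma~\ref{lemma2} almost verbatim, exploiting the observation that the hypothesis $a\in p^{m-1}\mathbb{Z}_{p^m}$ effectively reduces the first coordinate to a $\mathbb{Z}_p$-component for the purpose of all the inner products that appear. In $\mathbb{Z}_{p^m}\times\mathbb{Z}_{p^n}$ (with $m\le n$) the inner product is $\langle (x,y),(u,w)\rangle = xup^{n-m}+yw\pmod{p^n}$. Writing $a=cp^{m-1}$ with $c\in\mathbb{Z}_p$, one obtains
\[\langle(x,y),(a,p^{i_1})\rangle = cxp^{n-1}+yp^{i_1}\pmod{p^n},\qquad \langle(x,y),(0,p^{i_r})\rangle = yp^{i_r}\pmod{p^n},\]
and both expressions depend only on $y$ and on $x\bmod p$. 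This is precisely what made the original proof of Lemma~\ref{lemma2} go through in the $\mathbb{Z}_p\times\mathbb{Z}_{p^n}$ case, and it is the content of the hypothesis $a\in p^{m-1}\mathbb{Z}_{p^m}$.

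The first step is to introduce the slicing sets $S(r,v,k)$ exactly as in the proof of Lemma~\ref{lemma2} and establish the analogous set equality
\[\bigcup_{u\in S(r,v,k)}\bigcup_{j=0}^{p-1}H_A\bigl((a,p^{i_{r-1}}),up^{i_{r-1}}+jp^{n-1}\bigr) = H_A\bigl((0,p^{i_r}),vp^{i_r}+kp^{n-1}\bigr).\]
Each inclusion reduces to chasing the defining congruence modulo $p^n$; because $cxp^{n-1}$ depends only on $x\bmod p$, enlarging the first coordinate from $\mathbb{Z}_p$ to $\mathbb{Z}_{p^m}$ adds no constraints, and the proof of the claim transports verbatim. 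The second step is to invoke the equi-distributed property for $\mathbb{Z}_{p^m}\times\mathbb{Z}_{p^n}$ (established in \cite{S20}) to convert the hypotheses $(a,p^{i_1}),(0,p^{i_2}),\dots,(0,p^{i_s})\in\mathcal{Z}_A$ into equalities of hyperplane-slice cardinalities that differ only in their $jp^{n-1}$ summand. Finally, starting from the trivial decomposition
\[A = \bigcup_{j=0}^{p-1}\bigcup_{v_s\in[0,p^{n-1-i_s}-1]}H_A\bigl((0,p^{i_s}),v_sp^{i_s}+jp^{n-1}\bigr),\]
one iterates the claim for $r=s,s-1,\dots,2$ and concludes with the direction $(a,p^{i_1})$; each substitution extracts a factor of $p$, giving $|A|=p^s\cdot N$ for some nonnegative integer $N$, hence $p^s\mid|A|$.

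The only conceptual obstacle is to be sure the set-equality step is genuinely insensitive to the larger first coordinate. This is precisely controlled by the assumption $a\in p^{m-1}\mathbb{Z}_{p^m}$: it forces $axp^{n-m}=cxp^{n-1}$, a quantity determined by $x\bmod p$ alone, so the congruence $axp^{n-m}+yp^{i_{r-1}}\equiv up^{i_{r-1}}+jp^{n-1}\pmod{p^n}$ still yields $y\equiv u\pmod{p^{n-1-i_{r-1}}}$ with no restriction imposed on $x\in\mathbb{Z}_{p^m}$. Without this hypothesis, the first coordinate would contribute terms modulo higher powers of $p$ and the claim would fail; with it, every line of the proof of Lemma~\ref{lemma2} carries over unchanged.
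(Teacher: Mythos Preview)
Your proposal is correct and is exactly the approach the paper indicates: the paper states that the proof of Lemma~\ref{lemma5} is similar to that of Lemma~\ref{lemma2} and omits the details, and you have supplied precisely those details, correctly identifying that the hypothesis $a\in p^{m-1}\mathbb{Z}_{p^m}$ forces $axp^{n-m}=cxp^{n-1}$ so that every congruence in the proof of Lemma~\ref{lemma2} carries over verbatim.
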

The proof of Lemma~\ref{lemma5} is similar with the proof of Lemma~\ref{lemma2}, we omit the details. In order to study the zero set in $\mathbb{Z}_{p^{m}}\times\mathbb{Z}_{p^{n}}$, we define an equivalence relation. For $u,v\in\mathbb{Z}_{p^{m}}\times\mathbb{Z}_{p^{n}}$,
we define the relation $u\sim v$ if there exists $r\in\mathbb{Z}_{p^{n}}^{*}$ such that $u=rv$. Then the equivalent classes in $\mathbb{Z}_{p^{m}}\times\mathbb{Z}_{p^{n}}$ by $\sim$ are
\[(p^{i},0),(c_{i}p^{i},p^{j})\text{ for all $i\in[0,m-1]$, $j\in[0,n-1]$ and $c_{i}\in[0,p^{m-i}-1]$ with $\gcd(c_{i},p)=1$.}\]
Note that the first zero in Lemma~\ref{lemma5} is $(a,p^{i_{1}})$, where $a\in p^{m-1}\mathbb{Z}_{p^{m}}$, and there are much more equivalent classes in $\mathbb{Z}_{p^{m}}\times\mathbb{Z}_{p^{n}}$, it seems that our method does not work for the general case $\mathbb{Z}_{p^{m}}\times\mathbb{Z}_{p^{n}}$. It would be interesting to consider the Fuglede's conjecture in $\mathbb{Z}_{p^{m}}\times\mathbb{Z}_{p^{n}}$.

\end{document}